\newcommand{\rrvert}{\vert}
\newcommand{\llvert}{\vert}
\newtheorem{theorem}{Theorem}
\newtheorem{lemma}{Lemma}
\newtheorem{corollary}{Corollary}
\newcommand{\eqref}[1]{(\ref{#1})}
\newcommand{\eps}{\varepsilon}
\newcommand{\cadlag}{c\`adl\`ag}
\newcommand{\F}{\mathscr{F}}
\newcommand{\nbd}{-}
\newcommand{\sa}{$\sigma$\nbd field}
\newcommand{\re}{\mathbb{R}}
\newcommand{\p}{\bb{P}}
\newcommand{\defin}[1]{$\mathrm{#1}$}
\newcommand{\bb}[1]{\mathbb{#1}}
\newcommand{\mc}[1]{\mathscr{#1}}
\newcommand{\q}{\bb{Q}}
\newcommand{\se}{\bb{E}}
\newcommand{\si}{\mathbf{1}}
\newcommand{\cond}[2]{\vphantom{#2}#1\ | #2}
\begin{document}
\begin{frontmatter}

\title{Bridges of L\'evy processes conditioned to stay positive}
\runtitle{Bridges of L\'evy processes conditioned to stay positive}

\begin{aug}
\author{\fnms{Ger\'onimo} \snm{Uribe Bravo}\corref{}\ead[label=e1]{geronimo@matem.unam.mx}}
\runauthor{G. Uribe Bravo} 
\address{Instituto de Matem\'aticas, Universidad Nacional Aut\'onoma
de M\'exico, \'Area de la Investigaci\'on Cient\'ifica,
Circuito Exterior, Ciudad Universitaria, Coyoac\'an, 04510, M\'exico,
D.F.\\ \printead{e1}}
\end{aug}

\received{\smonth{3} \syear{2012}}
\revised{\smonth{9} \syear{2012}}

%
\begin{abstract}
We consider Kallenberg's hypothesis on the characteristic function of a
L\'evy
process and show that it allows the construction of weakly continuous
bridges of
the L\'evy process conditioned to stay positive. We therefore provide a notion
of normalized excursions L\'evy processes above their cumulative
minimum. Our
main contribution is the construction of a continuous version of the transition
density of the L\'evy process conditioned to stay positive by using the weakly
continuous bridges of the L\'evy process itself. For this, we rely on a method
due to Hunt which had only been shown to provide upper semi-continuous versions.
Using the bridges of the conditioned L\'evy process, the
Durrett--Iglehart theorem
stating that the Brownian bridge from $0$ to $0$ conditioned to remain
above $-\eps$
converges weakly to the Brownian excursion as $\eps\to0$, is extended
to L\'evy processes.
We also extend the Denisov decomposition of Brownian motion to L\'evy
processes and their bridges, as well as
Vervaat's classical result stating the equivalence in law of the
Vervaat transform of a Brownian bridge and the normalized Brownian excursion.
\end{abstract}

%
\begin{keyword}
\kwd{L\'evy processes}
\kwd{Markovian bridges}
\kwd{Vervaat transformation}
\end{keyword}
\pdfkeywords{L\'evy processes, Markovian bridges, Vervaat transformation}
\end{frontmatter}

\section{Introduction and statement of the results}
Our discussion will use the canonical setup: $X= ( X_t
)_{t\geq0}$
denotes the canonical process on the Skorohod space of \cadlag\
trajectories, $\F$ denotes \sa\ generated by $X$ (also written
$\sigma  ( X_s,s\geq0 )$), $ ( \F_t,t\geq0
)$ is the canonical filtration,
where $\F_t=\sigma  ( X_s,s\leq t )$ and $\theta_t,t\geq
0$, are the shift
operators given by $\theta_t  ( \omega )_s=\omega_{t+s}$.
Emphasis is
placed on the various probability measures considered.

Focus will be placed on two special (Markovian) families of probability
measures, denoted $ ( \p_x,x\in\re )$ and $ ( \p_x^\uparrow ,x\geq0 )$. The probability measure $\p_0$
corresponds to the law of a
L\'evy process: under $\p_0$ the canonical process has independent and
stationary increments and starts at $0$. Then $\p_x$ is simply the law
of $x+X$ under $\p_0$, and under each $\p_x$ the canonical process is
Markov and the conditional law of $ ( X_{t+s},s\geq0 )$
given $\F_t$ is $\p_{X_t}$. (Collections of probability measures on Skorohod
space satisfying the latter property are termed Markovian families.) We
also make use of the dual L\'evy process by letting $\hat\p_x$ denote
the law of $x-X$ under $\p_0$. Associated to $\p_x,x\in\re$, $\p_x^\uparrow$ can be interpreted as the law of the L\'evy process
conditioned to stay positive; as this event can have probability zero,
the precise definition of $\p_x^\uparrow$ can be described as follows:
a~L\'evy process
conditioned to stay positive is the (weak) limit of $X$ conditioned to
stay positive until an independent exponential $T_\alpha$ of parameter
$\alpha$ as $\alpha\to0$ (cf. Chaumont and Doney \cite{MR2164035}, Proposition 1). It is
actually simpler to actually construct L\'evy processes conditioned to
stay positive by a Doob transformation and justifying this passage to
the limit afterwards, as recalled in Section \ref
{ConditionedLPAndMeanderSection}.

Under very general conditions, given a Markovian family of probability
laws like $ ( \p_x,x\in\re )$, one can construct weakly continuous
versions of the conditional laws of $ ( X_s,s\leq t )$ under
$\p_x$
given $X_t=y$. They are termed bridges of $\p_x$ between $x$ and $y$ of
length $t$ and usually denoted $\p_{x,y}^t$. In Section \ref
{WeaklyContinuousBridgesSection}, we review the construction of these
bridges from Chaumont and Uribe~Bravo~\cite{MR2789508}. Our first result is to show that one can
apply this general recipe to the laws $\p_x^\uparrow$. To this end, we
impose two conditions on the L\'evy process.
\begin{longlist}[(K)]
\item[(K)] Under $\p_0$ and for any $t>0$, 
$
\int\llvert \se_0  ( \mathrm{e}^{\mathrm{i} u X_t} )\rrvert   \,\mathrm{d}u<\infty.
$
\item[(R)] $0$ is regular for both half-lines $(-\infty,0)$ and
$(0,\infty)$.
\end{longlist}
Assumption \defin{(K)} was introduced by Kallenberg as a means of
imposing the existence of densities for the law of $X_t$ for any $t>0$
which posses good properties (in particular continuity). A~construction
of L\'evy process bridges under hypothesis \defin{(K)} was first
accomplished in Kallenberg~\cite{MR628873} by means of convergence criteria for
processes with exchangeable increments. This construction is retaken as
an example of the general construction of Markovian bridges in Chaumont and Uribe~Bravo \cite
{MR2789508}.
%
\begin{theorem}
\label{WeaklyContinuousBridgeLPCSP}
Under \defin{(K)} and \defin{(R)}, we can construct bridges $\p_{x,y}^{\uparrow, t}$ of $\p_x^\uparrow$ for any $x,y\geq0$ and $t>0$
and that they are weakly continuous as functions of $x$ and $y$.
\end{theorem}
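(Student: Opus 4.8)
The plan is to reduce the statement to the abstract recipe of \cite{MR2789508}: that construction produces bridges, weakly continuous in their endpoints, for any Markovian family admitting transition densities (with respect to a fixed reference measure) that are jointly continuous in the spatial variables and strictly positive. Since the conditioned process is a Borel right process (Section \ref{ConditionedLPAndMeanderSection}), the entire task collapses to exhibiting a continuous and strictly positive version of its transition density $p^{\uparrow}_t(x,y)$. Writing $h$ for the renewal function used in the Doob transform defining $\p^\uparrow_x$, the conditioned semigroup has density
\[
p^{\uparrow}_t(x,y)=\frac{h(y)}{h(x)}\,q_t(x,y),
\]
where $q_t$ is the transition density of $X$ killed upon leaving $(0,\infty)$. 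Thus I must produce a continuous, positive version of $q_t$ and control the quotient $h(y)/h(x)$ up to the boundary; note that the method of Hunt only delivers an upper semicontinuous version of $q_t$, so genuine continuity is the real content.

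The device for obtaining continuity is the family of \emph{unconditioned} bridges $\p^t_{x,y}$, which exist and are weakly continuous by \defin{(K)} together with \cite{MR628873, MR2789508}. Disintegrating $\p_x$ along the value of $X_t$ and restricting to the event that the trajectory remains positive yields, for $x,y>0$, the identity
\[
q_t(x,y) = p_t(x,y)\,\imf{\p^t_{x,y}}{\inf_{s\leq t} X_s>0}.
\]
The first factor is continuous and everywhere positive by \defin{(K)}, \defin{(R)} and \cite{MR0240850}, so it remains to show that $\paren{x,y}\mapsto \imf{\p^t_{x,y}}{\inf_{s\leq t}X_s>0}$ is continuous and strictly positive. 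Continuity I would extract from the weak continuity of $\paren{x,y}\mapsto\p^t_{x,y}$ by proving that $\set{\inf_{s\leq t}X_s>0}$ is a continuity set for the limiting bridge: the discontinuity locus of its indicator is contained in $\set{\inf_{s\leq t}X_s=0}$, and regularity of $0$ for $(-\infty,0)$ is exactly what forces this set to be $\p^t_{x,y}$-null (the running infimum under the bridge has no atom at $0$). Positivity of the bridge probability follows from a support argument, again leaning on \defin{(R)} so that $\p^t_{x,y}$ assigns positive mass to trajectories staying strictly above $0$.

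I expect the continuity of $\paren{x,y}\mapsto\imf{\p^t_{x,y}}{\inf_{s\leq t}X_s>0}$ to be the main obstacle, precisely because the first-passage functional $\inf_{s\leq t}X_s$ is not continuous on Skorohod space and the indicator of staying positive is therefore only semicontinuous in general; the crux is to convert \defin{(R)} into the assertion that no bridge charges the delicate trajectories that touch $0$ without strictly crossing, so that the semicontinuous bounds coincide in the limit. A secondary difficulty is the boundary: to reach endpoints $x=0$ or $y=0$ one must weigh the ratio $h(y)/h(x)$ (with $h$ continuous and, under \defin{(R)}, vanishing at $0$) against the corresponding vanishing of $q_t$, using the entrance law of $\p^\uparrow_0$ from Section \ref{ConditionedLPAndMeanderSection} to define $p^\uparrow_t$ on the boundary and to verify its continuity there. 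Once a jointly continuous, strictly positive version of $p^{\uparrow}_t$ is available on $[0,\infty)^2$, I would invoke \cite{MR2789508} to obtain the bridges $\p^{\uparrow,t}_{x,y}$ and their weak continuity in $x$ and $y$.
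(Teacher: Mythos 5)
Your overall architecture coincides with the paper's: reduce to the bridge machinery of \cite{MR2789508}, obtain the killed density via Hunt's formula $\imf{q_t}{x,y}=\imf{p_t}{x,y}\,\imf{\p^t_{x,y}}{\underline X_t>0}$ (this is exactly \eqref{HuntsFormulaForDensity}), get continuity of the bridge probability from the portemanteau theorem plus the fact that regularity of $0$ for $(-\infty,0)$ makes the boundary of $\set{\underline X_t>0}$ bridge-null, and then pass to the conditioned process by $h$-transform. However, there is a genuine gap at the boundary point $y=0$, and it is fatal for precisely the bridges the paper most needs ($\p^{\uparrow,t}_{x,0}$, and above all the normalized excursion $\p^{\uparrow,t}_{0,0}$). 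You take Lebesgue measure as reference measure, i.e.\ you declare $\imf{p^\uparrow_t}{x,y}=\frac{\imf{h}{y}}{\imf{h}{x}}\imf{q_t}{x,y}$ to be \emph{the} transition density. But, as you note yourself, $\imf{h}{0}=0$ under \defin{(R)}, and $\imf{q_t}{x,\cdot}$ is bounded (by $p_t$, which is bounded under \defin{(K)}); hence any version of this density which is continuous on $(0,\infty)$ tends to $0$ as $y\to 0$, so its continuous extension to $[0,\infty)$ vanishes at $y=0$. Strict positivity on $[0,\infty)^2$ therefore fails for \emph{every} continuous Lebesgue-density, $0\notin\mc{P}_{x,t}$, and Theorem \ref{WeaklyContinuousBridgesTheorem} cannot produce any bridge ending at $0$. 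Your proposed remedy, the entrance law of $\p^\uparrow_0$, only repairs the other corner $x=0$; it says nothing about conditioning the path to \emph{end} at $0$.

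The missing idea is duality, i.e.\ the renewal function $\hat h$ of the dual ladder height process and a change of reference measure. The paper works with $\imf{\lambda^\uparrow}{dy}=\imf{h}{y}\imf{\hat h}{y}\,dy$, with respect to which the density is $\imf{p^\uparrow_t}{x,y}=\imf{q_t}{x,y}/\paren{\imf{h}{x}\imf{\hat h}{y}}$; Lemma \ref{DualityForLPSP} (duality of $P^\uparrow$ and $\hat P^\uparrow$ with respect to $\lambda^\uparrow$) together with the time-reversal identity $\imf{q_t}{x,y}=\imf{\hat q_t}{y,x}$ (itself proved by reversing the weakly continuous L\'evy bridges) yields $\imf{p^\uparrow_t}{x,y}=\imf{\hat p^\uparrow_t}{y,x}$, so the troublesome exit boundary $y=0$ becomes the entrance boundary of the \emph{dual} conditioned process and can be defined, and shown continuous and positive, by the same entrance-law argument you propose for $x=0$. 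Without $\hat h$ in the reference measure there is nothing to cancel the vanishing of $\imf{h}{y}\imf{q_t}{x,y}$. A second, smaller, gap: you dismiss strict positivity of $q_t$ as ``a support argument leaning on \defin{(R)}.'' This is in fact the delicate half (the paper's remark emphasizes it): the published proof uses cyclic exchangeability following \cite{MR1417982} to show $\imf{Q_t}{x,\ball{\delta}{y}}>0$ whenever $\ball{\delta}{y}\subset(0,\infty)$, and then a small-time estimate plus Chapman--Kolmogorov to upgrade this to pointwise positivity of $q_t$; supports of L\'evy bridges are not generically rich enough for a soft argument, so this step needs an actual mechanism.
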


Theorem \ref{WeaklyContinuousBridgeLPCSP} presents another example of
the applicability of Theorem 1 in Chaumont and Uribe~Bravo \cite{MR2789508}, and the proof of
the former consists on verifying the technical hypotheses in the
latter. These technical hypotheses are basically: the existence of a
continuous and positive version of the densities of $X_t$ under $\p_x^{\uparrow}$. For nonzero starting states, we will inherit absolute
continuity from that of the L\'evy process killed upon becoming
negative (in Lemma \ref{ExistenceOfDensitiesForLPCSP}) and the later
can be studied by a technique inspired from Hunt \cite{MR0079377} for the
Brownian case, in which a transition density for the killed L\'evy
process is obtained from a transition density of the L\'evy process
using its bridges. (Cf. Equation \eqref{HuntsFormulaForDensity} and
Lemma \ref{ExistenceOfDensitiesForKilledProcess}.) Hunt's technique has
typically allowed only the construction of lower semicontinuous
versions of the density, but with weakly continuous bridges one can
show that Hunt's density is actually continuous. This is one possible
application of the existence of weakly continuous Markovian bridges.
Another problem is then to characterize the points at which the density
is positive. Hunt does this for Brownian motion and the result has been
extended to (multidimensional) stable L\'evy processes in the symmetric
case by Chen and Song \cite{MR1473631}, Theorem~2.4 and in the asymmetric case by
Vondra{\v{c}}ek \cite{MR1918106}, Theorem~3.2. We study positivity of the density by
exploiting the cyclic exchangeability property of L\'evy processes,
following Knight \cite{MR1417982}.

Recall that when $\p_0$ is the law of Brownian motion, $\p_{x,y}^t$ is
the law of the Brownian bridge between $x$ and $y$ of length $t$ and
the corresponding law $\p_{0,0}^{\uparrow,t}$ is the law of a Brownian
excursion of length $t$. %
In this context, our next result is an extension to L\'evy processes of
the classical result of Durrett, Iglehart and
Miller \cite{MR0436353}, which covers the Brownian case.

\begin{corollary}
\label{DIMCorollary}
The conditional law of $ ( X_s,s\leq t )$ under $\p_{0,0}^t$ given
$\underline X_t>-\eps$, where
\[
\underline X_t=\inf_{s\leq t}X_s,
\]
converges weakly, as $\eps\to0$ to $\p_{0,0}^{\uparrow,t}$.
\end{corollary}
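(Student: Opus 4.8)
The plan is to reduce the statement to the weak continuity of the bridges $\p_{x,y}^{\uparrow,t}$ in their endpoints (Theorem \ref{WeaklyContinuousBridgeLPCSP}) by first identifying the conditioned bridge law appearing in the corollary with a bridge of the conditioned process started and ended at height $\eps$, and then letting $\eps\to0$. The key is that the delicate conditioning is performed at fixed $\eps>0$, where it is elementary, so that the limit only ever sees weakly continuous excursion bridges.

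First I would establish the elementary identity, valid for $x,y>0$, that the conditioned bridge $\p_{x,y}^{\uparrow,t}$ coincides with the law of $\paren{X_s,s\leq t}$ under $\p_{x,y}^t$ conditioned on $\set{\underline X_t>0}$. Since $\p_x^\uparrow$ is the Doob $h$\nbd transform of the L\'evy process killed upon entering $(-\infty,0)$, its transition density factors as $p_t^\uparrow(x,y)=\tfrac{h(y)}{h(x)}q_t(x,y)$, where $q_t$ is the sub-Markovian transition density of the killed process and $h$ the relevant harmonic function. When one writes out the finite-dimensional distributions of the $\uparrow$\nbd bridge, the factors involving $h$ telescope and cancel at every intermediate time, leaving the bridge built from $q_t$ alone; the latter is exactly $\p_{x,y}^t$ conditioned on survival, i.e. on $\set{\underline X_t>0}$. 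This conditioning is legitimate because the continuity and positivity of $q_t$ (Lemmas \ref{ExistenceOfDensitiesForKilledProcess} and \ref{ExistenceOfDensitiesForLPCSP}) force $\p_{x,y}^t\paren{\underline X_t>0}>0$, and the identity passes from finite-dimensional distributions to the path level by uniqueness of the bridge construction.

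Next I would invoke spatial homogeneity. Write $S_c$ for the map $\omega\mapsto\omega+c$ on Skorohod space. The L\'evy bridges satisfy $(S_c)_*\p_{x,y}^t=\p_{x+c,y+c}^t$, and $S_\eps$ carries the event $\set{\underline X_t>-\eps}$ onto $\set{\underline X_t>0}$; hence the law of $\paren{X_s,s\leq t}$ under $\p_{0,0}^t$ given $\underline X_t>-\eps$ is the image under $S_{-\eps}$ of the law of $\paren{X_s,s\leq t}$ under $\p_{\eps,\eps}^t$ given $\underline X_t>0$. By the previous step the latter equals $\p_{\eps,\eps}^{\uparrow,t}$, so the conditional law in the corollary is precisely $(S_{-\eps})_*\p_{\eps,\eps}^{\uparrow,t}$.

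Finally I would let $\eps\to0$. By Theorem \ref{WeaklyContinuousBridgeLPCSP} one has $\p_{\eps,\eps}^{\uparrow,t}\to\p_{0,0}^{\uparrow,t}$ weakly, while $S_{-\eps}$ converges to the identity uniformly in $\omega$, so an extended continuous-mapping argument gives $(S_{-\eps})_*\p_{\eps,\eps}^{\uparrow,t}\to\p_{0,0}^{\uparrow,t}$, which is the claim. I expect the main obstacle to lie not in this last limit --- immediate once Theorem \ref{WeaklyContinuousBridgeLPCSP} is in hand --- but in the first step: justifying the path-level identity between the conditioned bridge and the positively-conditioned L\'evy bridge, in particular verifying that the conditioning event has positive probability and that the discontinuity of $\omega\mapsto\underline X_t$ and of $\indi{\underline X_t>0}$ on Skorohod space causes no harm. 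The virtue of this route is that such issues are confronted only at fixed $\eps>0$, so the passage to the limit rests solely on weak continuity of the already-constructed excursion bridges and never re-conditions on the boundary event $\set{\underline X_t>0}$.
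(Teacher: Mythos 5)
Your proposal is correct and takes essentially the same route as the paper: identify $\p_{\eps,\eps}^{\uparrow,t}$ with the killed-process bridge $\q_{\eps,\eps}^{t}$ (equivalently, with $\p_{\eps,\eps}^{t}$ conditioned on $\set{\underline X_t>0}$) via the telescoping $h$-transform, use spatial homogeneity to recognize the shifted law as $\p_{0,0}^t$ conditioned on $\underline X_t>-\eps$, and conclude by the weak continuity of the bridges from Theorem \ref{WeaklyContinuousBridgeLPCSP}. The paper's proof consists of exactly these three remarks, stated more tersely, so your additional care about the positivity of the conditioning event and the shift-plus-weak-convergence limit only fills in details the paper leaves implicit.
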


The Brownian case of Corollary \ref{DIMCorollary} was first proved by
Durrett, Iglehart and
Miller \cite{MR0436353} by showing the convergence of finite-dimensional
distributions and then tightness, which follows from explicit
computations with Brownian densities. Another proof for the Brownian
case was given by Blumenthal \cite{MR704566} this time using rescaling, random
time change and simple infinitesimal generator computations. For us,
Corollary \ref{DIMCorollary} is a simple consequence of Theorem \ref
{WeaklyContinuousBridgeLPCSP}.

We now present a generalization of a decomposition of the Brownian
trajectory at the time it reaches its minimum on a given interval due
to Denisov \cite{MR726906}.

Let $\rho_t$ be the first time that $X$ reaches its minimum on the
interval $[0,t]$. Consider the pre- and post-minimum processes on the
interval $[0,t]$ given by:
\[
{X}^{\leftarrow}_s=X_{ ( \rho_t-s )^+-}-\underline X_t\quad
\mbox {and} \quad{X}^{\rightarrow}_s=X_{ ( \rho_t+s )\wedge t}-\underline
X_t
\]
defined for $s\geq0$, where $X_{s-}$ is the left limit of $X$ at $s$.

A L\'evy meander of length $t$ (following Chaumont and Doney \cite{MR2663630}) is the weak
limit as $\eps\to0$ of $X$ conditioned to remain above $-\eps$ on
$[0,t]$ under $\p_0$. L\'evy meanders can also be characterized by an
absolute continuity relationship with L\'evy processes conditioned to
stay positive as recalled in Section \ref
{ConditionedLPAndMeanderSection}. Denote by $\p^{\mathrm{me},t}$ the law of a
meander of length $t$ and by $\hat\p^{\mathrm{me},t}$ the meander of the dual
L\'
evy process.

\begin{theorem}
\label{DenisovTheorem}
Assume conditions \defin{(K)} and \defin{(R)}.
Under $\p_{0}$, the conditional law of $ ( {X}^{\leftarrow
},{X}^{\rightarrow} )$
given $\rho_t$ is $\hat\p^{\mathrm{me},\rho_t}\otimes\p^{\mathrm{me},t-\rho_t}$.
\end{theorem}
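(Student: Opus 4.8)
The plan is to split the proof into a reduction by time reversal, an approximation of the non-stopping time $\rho_t$ by genuine stopping times, and a passage to the limit that identifies the two halves as meanders.

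First I would reduce the two assertions to one. Under $\p_0$, the time-reversed and space-reflected process $\paren{X_{t}-X_{(t-s)-},\ 0\le s\le t}$ has the law of the dual L\'evy process on $[0,t]$; this invariance descends to the bridges, so that the reversal of $\p_{0,y}^t$ is the dual bridge $\hat\p_{0,y}^t$. Under this reversal the time $\rho_t$ of the minimum is sent to $t-\rho_t$ and the roles of $\premin{X}$ and $\postmin{X}$ are exchanged, the minimum of $X$ becoming the maximum of the reversal. Consequently it suffices to identify $\postmin{X}$ with a meander $\p^{me,t-\rho_t}$ and to establish the conditional independence of $\premin{X}$ and $\postmin{X}$ given $\rho_t$; the claim that $\premin{X}$ is a dual meander $\hat\p^{me,\rho_t}$ then follows by applying the identification to the reversal.

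Since $\rho_t$ is not a stopping time, I would replace the split at $\rho_t$ by a split at a passage time and then let a regularizing parameter tend to $0$. For $\eps>0$, decomposing $X$ at the first passage below a level, or conditioning $X$ to remain above $-\eps$ on $[0,t]$, splits the trajectory, via the strong Markov property, into a pre-part that stays above the level and an independent fresh copy started from the minimal value. Assumption \defin{(R)} guarantees that the minimum on $[0,t]$ is attained at the single time $\rho_t$ and that the approximating passage times converge to $\rho_t$ as $\eps\to0$, while \defin{(K)} provides the continuous, strictly positive densities needed to write the regular conditional law of the post-part given the split time and the minimal value and to control it in the limit. The post-part, started from the approximate minimum and conditioned to remain above it, converges as $\eps\to0$ to the meander of length $t-\rho_t$: this is exactly the defining description of $\p^{me,t-\rho_t}$ as the weak limit of $\p_0\paren{\,\cdot\mid \underline X>-\eps}$ together with its absolute continuity with respect to $\p_0^\uparrow$ on $\F_{t-\rho_t}$.

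Finally I would disintegrate with respect to the joint law of $\paren{\rho_t,\underline X_t}$, using the continuous densities furnished by \defin{(K)} to justify the regular conditional distributions, and assemble the two halves. Independence of the pre- and post-parts holds at each level by the Markov property at the approximating stopping time, and independence is preserved in the weak limit, which yields the product form $\hat\p^{me,\rho_t}\otimes\p^{me,t-\rho_t}$. The main obstacle is precisely the non-stopping-time nature of $\rho_t$: one must control the time \emph{and} the spatial location of the minimum simultaneously and show that the approximating decomposition converges to the decomposition at the exact minimum, with the limiting pieces being honest meanders of the prescribed lengths rather than meander bridges or defective laws. Here the weak continuity of the bridges from Theorem \ref{WeaklyContinuousBridgeLPCSP}, together with the identification of the meander as their $\eps\to0$ limit, is what makes the limit identifiable, and \defin{(R)} is what ensures a single well-defined splitting time so that the two pieces neither overlap nor leave a gap.
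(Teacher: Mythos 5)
Your overall architecture---approximate the split at $\rho_t$, apply the (strong) Markov property at the approximating times, pass to the limit, and recover the pre-minimum half by time reversal at $t$---is the same as the paper's. The one real structural difference is the discretization: you split at first-passage times below a grid of levels (strong Markov property at stopping times), whereas the paper splits at the dyadic times $\rho_t^n=\lfloor 2^n\rho_t\rfloor/2^n$ and applies the plain Markov property at the deterministic instants $(k+1)/2^n$. Either scheme can be made to work, and your later remark that the post-passage piece is a L\'evy process started near the level and \emph{conditioned} to stay above the next level down is the right picture (your first formulation, ``an independent fresh copy started from the minimal value,'' is not accurate: the piece starts at the overshoot position, not at the minimum, and it is conditioned, not free).

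The genuine gap is in the identification of the limit. You justify it by ``the defining description of $\p^{me,t-\rho_t}$ as the weak limit of $\p_0\paren{\cdot\mid \underline X_t>-\eps}$'' together with ``the weak continuity of the bridges from Theorem \ref{WeaklyContinuousBridgeLPCSP}.'' Neither tool delivers the step. In any approximation scheme (yours or the paper's), the conditioned post-piece starts at a random point $x_\eps>0$ tending to $0$ (an overshoot, or $X_{(k+1)/2^n}-\underline X_{(k+1)/2^n}$) and lives on a random horizon $t_\eps$ tending to $t-\rho_t$ but never equal to it; what is needed is that $\p_{x_n}\paren{\cdot\mid \underline X_{t_n}>0}$ converges to $\p^{me,s}$ \emph{jointly} as $x_n\to 0$ and $t_n\to s$. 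This is exactly Lemma 4 of \cite{MR2663630} (extended via \cite{MR2375597}), which is the key external ingredient of the paper's proof. The weak-limit description you invoke is the degenerate case $x_n\equiv 0$, $t_n\equiv s$, which never occurs along the approximation; and Theorem \ref{WeaklyContinuousBridgeLPCSP} concerns the two-point pinned laws $\p^{\uparrow,t}_{x,y}$, which say nothing about the unpinned conditioned laws appearing here (bridges enter only later, in Theorem \ref{DenisovForBridgesTheorem}). So the decisive limiting step of your argument is asserted rather than proved; it is fillable, but only by the Chaumont--Doney convergence lemma, which is the heart of the matter.
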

The previous result is a consequence of results in Chaumont and Doney \cite{MR2164035}. It
is our stepping stone on the way to our generalization of Vervaat's
relationship between the Brownian bridge and the normalized Brownian
excursion. This extension requires the following conditioned version of
Theorem \ref{DenisovTheorem}.
%
\begin{theorem}
\label{DenisovForBridgesTheorem}
A regular conditional distribution of $ ( X^\leftarrow
,X^\rightarrow  )$ given $\rho_t=s$ and $-\underline X_t=y$
under $\p_{0,0}^t$ is
$\hat
\p^{\uparrow,s}_{0,y}\otimes\p^{\uparrow,t-s}_{0,y}$.
\end{theorem}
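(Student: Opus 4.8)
The plan is to obtain this conditioned decomposition from the unconditioned one, Theorem \ref{DenisovTheorem}, by performing a further disintegration over the common value $y=-\underline X_t$, exploiting the fact that the bridges of the process conditioned to stay positive are precisely the meander laws pinned at their terminal value. First I would record the elementary but crucial observation that, once $\rho_t=s$ has been fixed, the two remaining conditioning quantities are carried by the endpoints of the two paths in the Denisov decomposition. Indeed, writing $M:=-\underline X_t$, the definitions give the terminal values $\premin{X}_{\rho_t}=X_{0-}-\underline X_t=M$ and $\postmin{X}_{t-\rho_t}=X_t-\underline X_t=X_t+M$, so that on $\set{\rho_t=s}$,
\begin{esn}
\set{-\underline X_t=y,\ X_t=0}=\set{\premin{X}_{\rho_t}=y,\ \postmin{X}_{t-\rho_t}=y}.
\end{esn}
In other words, passing from $\p_0$ to the bridge $\p_{0,0}^t$ and additionally fixing the minimal height $y$ amounts exactly to pinning both the pre- and the post-minimum path at the same terminal height $y$.

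Next I would identify these endpoint-conditioned meander laws with the bridges of the conditioned process. Since the meander $\p^{me,t}$ is absolutely continuous with respect to $\p_0^\uparrow$ on $\F_t$ with a Radon--Nikodym density that depends on the path only through the terminal value $X_t$, the regular conditional law of $\p^{me,t}$ given $X_t=y$ coincides with the bridge $\p_{0,y}^{\uparrow,t}$ supplied by Theorem \ref{WeaklyContinuousBridgeLPCSP}; the time reversal built into the definition of $\premin{X}$ turns the pre-minimum segment into a path of the dual process, so the analogous statement for $\hat\p^{me,s}$ yields $\hat\p_{0,y}^{\uparrow,s}$. By Theorem \ref{DenisovTheorem}, conditionally on $\rho_t=s$ the pair $\paren{\premin{X},\postmin{X}}$ is distributed as the product $\hat\p^{me,s}\otimes\p^{me,t-s}$, and conditioning a product measure on an event of the form $\set{f=y,\ g=y}$ where $f$ and $g$ depend on separate coordinates keeps the two coordinates independent while conditioning each one separately. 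Hence conditioning further on $\premin{X}_{\rho_t}=\postmin{X}_{t-\rho_t}=y$ produces precisely $\hat\p_{0,y}^{\uparrow,s}\otimes\p_{0,y}^{\uparrow,t-s}$, which is the asserted kernel.

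The remaining and, I expect, hardest step is to legitimise the two superimposed layers of conditioning, that is, to show that
\begin{esn}
\p_{0,0}^t\paren{\cond{\cdot}{\rho_t=s,\ -\underline X_t=y}}=\p_0\paren{\cond{\cdot}{X_t=0,\ \rho_t=s,\ -\underline X_t=y}},
\end{esn}
each side being read through its regular conditional distribution. Because $\set{X_t=0}$ is a null event I cannot simply invoke the tower property; instead I would verify the regular conditional distribution directly, testing against bounded continuous functionals $F$ of the pre-minimum path, $G$ of the post-minimum path, and $h$ of $\paren{\rho_t,-\underline X_t}$, and computing the expectation $\se_{\p_{0,0}^t}\!\left[F(\premin{X})\,G(\postmin{X})\,h\paren{\rho_t,-\underline X_t}\right]$. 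Rewriting $\p_{0,0}^t$ through the continuous positive density $p_t$ of $X_t$ under $\p_0$ (available by \defin{(K)} and \defin{(R)}) reduces this to a $\p_0$-expectation, to which the product form of Theorem \ref{DenisovTheorem} together with the endpoint disintegration of the previous paragraph applies. The weak continuity of the L\'evy bridges and of the families $\p_{0,y}^{\uparrow,\cdot}$ and $\hat\p_{0,y}^{\uparrow,\cdot}$ is what guarantees that $\hat\p_{0,y}^{\uparrow,s}\otimes\p_{0,y}^{\uparrow,t-s}$ depends measurably on $(s,y)$ and genuinely constitutes a regular conditional distribution; the delicate point is matching the null-set conditioning on $X_t=0$ with the pinning of the post-minimum endpoint at $y$, which is exactly where the weakly continuous bridge construction, rather than a naive limiting argument, becomes indispensable.
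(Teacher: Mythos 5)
Your proposal follows essentially the same route as the paper's proof: Denisov's decomposition (Theorem \ref{DenisovTheorem}), the identification of the meander pinned at its terminal value with the bridge of the conditioned process (via the absolute continuity relation between $\p^{me,t}$ and $\p_0^\uparrow$, whose density depends only on $X_t$), and a disintegration over the terminal value of $X_t$ combined with weak continuity of all the bridge laws involved to make rigorous the null-event conditioning on $X_t=0$. The paper executes your final step by integrating the conditioned Denisov identity against test functions $g_2(x)\,\imf{p_t}{0,x}\,dx$, deducing equality of the bridge expectations for almost every terminal value $x$, and then invoking continuity in $x$ of both integrands to evaluate at $x=0$ --- which is precisely the ``testing against functionals plus weak continuity'' argument you outline.
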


We finally turn to an extension of the classical relationship between
the Brownian bridge between $0$ and $0$ and the Brownian excursion of
the same length.

\begin{theorem}
\label{VervaatTheorem}
Define the Vervaat transformation $V$ of $X$ on $[0,t]$ by
\[
V_s=X_{  ( \rho_t+s )\bmod t}-\underline X_t.
\]
Under \defin{(K)} and \defin{(R)}, the law of $V$ under $\p_{0,0}^t$ is
$\p_{0,0}^{\uparrow,t}$.
\end{theorem}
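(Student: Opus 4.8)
The plan is to realize the Vervaat transform $V$ as a deterministic surgery on the pre- and post-minimum processes and then read off its law from Theorem \ref{DenisovForBridgesTheorem}. First I would unwind the definition $V_s=X_{t\wedge((\rho_t+s)\bmod t)}-\underline X_t$. For $s\in[0,t-\rho_t)$ one has $\rho_t+s<t$, so $V_s=X_{\rho_t+s}-\underline X_t=\postmin X_s$; for $s\in[t-\rho_t,t]$, writing $u=\rho_t+s-t\in[0,\rho_t]$, one has $V_s=X_u-\underline X_t$, which is exactly the time reversal of $\premin X$ restricted to $[0,\rho_t]$. Thus $V$ is the concatenation, at the time $t-\rho_t$, of $\postmin X$ followed by the time reversal of $\premin X$, and the two pieces meet at the common value $\postmin X_{t-\rho_t}=X_t-\underline X_t=-\underline X_t$ (recall $X_t=0$ under $\p^t_{0,0}$).

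Next I would condition on $\rho_t=s$ and $-\underline X_t=y$ and invoke Theorem \ref{DenisovForBridgesTheorem}, under which $(\premin X,\postmin X)$ is distributed as $\hat\p^{\uparrow,s}_{0,y}\otimes\p^{\uparrow,t-s}_{0,y}$. The post-minimum piece then contributes $\p^{\uparrow,t-s}_{0,y}$ on $[0,t-s]$. For the pre-minimum piece I need the time-reversal identity for conditioned bridges, namely that reversing $\hat\p^{\uparrow,s}_{0,y}$ yields $\p^{\uparrow,s}_{y,0}$; this is the bridge form of the duality between a L\'evy process conditioned to stay positive and its dual conditioned to stay positive, which I would establish from the corresponding duality for the killed process together with the $h$-transform relation defining $\p^\uparrow$. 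Granting it, the conditional law of $V$ given $\{\rho_t=s,\,-\underline X_t=y\}$ is the independent concatenation, at time $t-s$ and height $y$, of $\p^{\uparrow,t-s}_{0,y}$ and $\p^{\uparrow,s}_{y,0}$. By the Markov (splicing) property of the bridges from Theorem \ref{WeaklyContinuousBridgeLPCSP}, this concatenation is precisely $\p^{\uparrow,t}_{0,0}(\,\cdot\mid X_{t-s}=y)$.

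It then remains to integrate this conditional law against the joint law of $(\rho_t,-\underline X_t)$ under $\p^t_{0,0}$ and to recognize the result as $\p^{\uparrow,t}_{0,0}$; I expect this averaging step to be the main obstacle. Writing $\mu$ for the law of $V$, the previous paragraph gives $\mu=\int\p^{\uparrow,t}_{0,0}(\,\cdot\mid X_{t-s}=y)\,\p^t_{0,0}(\rho_t\in ds,\,-\underline X_t\in dy)$, and this equals $\p^{\uparrow,t}_{0,0}$ exactly when $\p^t_{0,0}(\rho_t\in ds,\,-\underline X_t\in dy)=\tfrac{ds}{t}\,\p^{\uparrow,t}_{0,0}(X_{t-s}\in dy)$. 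To reach this I would first exploit the cyclic exchangeability of L\'evy bridges: since $V$ is invariant under cyclic shifts and a uniform shift preserves $\p^t_{0,0}$, the pair $(V,\,t-\rho_t)$ has law $\mu\otimes\mathrm{Unif}[0,t]$, so that $\rho_t$ is uniform and independent of $V$, with $-\underline X_t=V_{t-\rho_t}$. This reduces the required identity to matching a single one-dimensional marginal.

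Finally I would close the argument by identifying that marginal. The splicing property forces $\mu$ to be a Markov bridge of $X^\uparrow$ from $0$ to $0$, and uniqueness of such bridges (again from Theorem \ref{WeaklyContinuousBridgeLPCSP}) yields $\mu=\p^{\uparrow,t}_{0,0}$. Alternatively, arguing by hand, the marginal identity follows from the Denisov decomposition of Theorem \ref{DenisovTheorem} together with the $h$-transform expression of the conditioned transition density, which turns the law of $(\rho_t,-\underline X_t)$ into the bridge marginal proportional to $p^{\uparrow}_{t-s}(0,y)\,p^{\uparrow}_{s}(y,0)$. I regard verifying this marginal, equivalently invoking the uniqueness of the conditioned bridge, as the one genuinely delicate point of the argument; everything else is path algebra, the reversal duality, and the splicing property already packaged in the cited results.
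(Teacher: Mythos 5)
Your proposal is correct and follows essentially the same route as the paper: decompose $V$ into the post-minimum piece concatenated with the time-reversed pre-minimum piece, apply Theorem \ref{DenisovForBridgesTheorem} together with the bridge time-reversal duality (reversal of $\hat\p^{\uparrow,s}_{0,y}$ is $\p^{\uparrow,s}_{y,0}$), and then match the mixing law of $\paren{\rho_t,-\underline X_t}$ under $\p_{0,0}^t$ --- uniform in $s$ by cyclic exchangeability, with conditional marginal $p^\uparrow_{t-s}(0,y)\,p^\uparrow_{s}(y,0)\,\lambda^\uparrow(dy)/p^\uparrow_t(0,0)$ obtained from the Denisov decomposition and the $h$-transform --- against the law of $\paren{U,X_U}$ under $\p^{\uparrow,t}_{0,0}\otimes\mathrm{Unif}[0,t]$, which is exactly the paper's argument via Corollary \ref{JointDensityPositionMinimumFinalValueCorollaryWithBridge} and Chaumont's uniform law of $\rho_t$. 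The only caveat is that your alternative closing step via ``uniqueness of Markov bridges'' does not follow from Theorem \ref{WeaklyContinuousBridgeLPCSP} as stated (it would require uniqueness of the entrance law of $P^\uparrow$ at $0$), but your ``by hand'' route is the one the paper takes, so the proof stands.
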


Theorem \ref{VervaatTheorem} was found by Vervaat \cite{MR515820} for Brownian
motion and proved there using approximation by a simple random walk.
Biane \cite{MR838369} gives a proof using excursion theory for Brownian
motion. Then Chaumont \cite{MR1465814} gave a definition of normalized stable
excursion and proved Theorem \ref{VervaatTheorem} in the case of stable
L\'evy processes, again using excursion theory. This extension of
Vervaat's theorem is the closest to the one in this work. Miermont \cite
{MR1844511} gives a version of Theorem \ref{VervaatTheorem} for
spectrally positive L\'evy processes in the context of the intensity
measures for excursions above the cumulative minimum, with an explicit
link with the L\'evy process conditioned to stay positive. Finally,
Fourati \cite{MR2139029} gives an abstract version of Theorem \ref
{VervaatTheorem} for L\'evy processes, again as a relation between two
$\sigma$-finite measures which can be though of as bridges of random
length, although there is no explicit link with L\'evy processes
conditioned to stay positive. After establishing this link, Theorem
\ref{VervaatTheorem} would follow from the theory developed in Fourati \cite
{MR2139029} using regularity results for bridges (like weak continuity)
in order to condition by the length. Instead of that, we propose a
direct proof.

The paper is organized as follows. In Sections \ref
{WeaklyContinuousBridgesSection} and \ref{LPSection}, we review the
construction of Markovian and L\'evy bridges of Chaumont and Uribe~Bravo \cite{MR2789508}. In
Section \ref{ConditionedLPAndMeanderSection}, we define, following
Chaumont and Doney \cite
{MR2164035}, L\'evy processes conditioned to stay positive and
meanders. Section \ref{BridgesOfConditionedLPSection} is devoted to the
construction of bridges of L\'evy processes conditioned to stay
positive, where we prove Theorem \ref{WeaklyContinuousBridgeLPCSP} and
Corollary \ref{DIMCorollary}. In Section \ref{DenisovSection}, we
consider extensions and consequences of Denisov's theorem, proving in
particular Theorems \ref{DenisovTheorem} and \ref
{DenisovForBridgesTheorem}. Finally, in Section \ref{VervaatSection},
we prove our extension of Vervaat's theorem, which is Theorem \ref
{VervaatTheorem}.

\section{Weakly continuous bridges of Markov processes}
\label{WeaklyContinuousBridgesSection}
Let $\mathbf{P}_x$ be the law of a Feller process which starts at $x$
which is an element of a polish space $S$ (for us either $\re$,
$(0,\infty)$, or $[0,\infty)$).
Suppose $P$ is its semigroup and assume that:
\begin{longlist}[(AC)]
\item[(AC)]There is a $\sigma$-finite measure $\mu$ and a function
$h_t  ( x,\cdot )$ such that
\[
P_tf ( x )=\int f ( y ) h_t ( x,y ) \mu ( \mathrm{d}y ).
\]
\item[(C)]The function $ ( s,x,y )\mapsto h_s  (
x,y )$ is continuous.
\item[(CK)]The Chapman--Kolmogorov equations
\[
h_{s+t} ( x,z )=\int h_s ( x,y )h_t ( y,z ) \mu
( \mathrm{d}y )
\]
are satisfied.
\end{longlist}
Let us denote by $B_\delta  ( y )$ the ball of radius
$\delta$
centered at $y$ and
\[
\mc{P}_{x,t}= \bigl\{ y\in S\dvt h_t ( x,y )>0 \bigr\}.
\]

\begin{theorem}[(Chaumont and Uribe~Bravo \cite{MR2789508})]
\label{WeaklyContinuousBridgesTheorem}
Under \defin{(AC),(C)} and \defin{(CK)}, the law of $X$ on $[0,t]$
under $\mathbf{P}_x$ given $X_t\in B_\delta  ( y )$
converges weakly
in the Skorohod $J_1$ topology to a measure $\mathbf{P}_{x,y}^t$ for
every $y\in\mc{P}_{x,t}$. Furthermore:
\begin{enumerate}
\item The family $ \{ \mathbf{P}_{x,y}^t\dvt y\in\mc{P}_{x,t}
\}$ is a
regular conditional distribution for $X$ on $[0,t]$ given $X_t$ under
$\mathbf{P}_x$.
\item The finite-dimensional distributions of $\mathbf{P}_{x,y}^t$ are
given by
\begin{eqnarray*}
&&\mathbf{P}_{x,y}^t ( X_{t_1}\in
\mathrm{d}x_1,\ldots, X_{t_n}\in \mathrm{d}x_n )
\\
&&\quad=h_{t_1} ( x,x_1 )h_{t_2} ( x_1,x_2
)\cdots h_{t_n-t_{n-1}} ( x_{n-1},x_n )\frac{h_{t-t_n}
( x_n,y )}{h_t  ( x,y )} \,
\mathrm{d}x_1\cdots \,\mathrm{d}x_n.
\end{eqnarray*}
\item As $y'\to y$ and $x'\to x$, $\mathbf{P}_{x',y'}^t$ converges
weakly to $\mathbf{P}_{x,y}^t$.
\end{enumerate}
\end{theorem}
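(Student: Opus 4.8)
The plan is to construct the candidate limit $\p_{x,y}^t$ directly from its finite-dimensional distributions and then to upgrade finite-dimensional convergence to weak convergence, and finally to read off (1) and (3). First I would fix $0<t_1<\cdots<t_n<t$ and compute the law of $\paren{X_{t_1},\dots,X_{t_n}}$ under $\p_x$ conditioned on $X_t\in B_\delta(y)$. By the Markov property and \defin{(AC)}, conditioning multiplies the unconditioned joint density $p_{t_1}(x,x_1)\cdots p_{t_n-t_{n-1}}(x_{n-1},x_n)$ (with respect to $\mu^{\otimes n}$) by the factor
$$
\frac{\int_{B_\delta(y)} p_{t-t_n}(x_n,w)\,\mu(dw)}{\int_{B_\delta(y)} p_t(x,w)\,\mu(dw)}.
$$
Letting $\delta\to0$ and using the continuity \defin{(C)} of $p$, this ratio converges to $p_{t-t_n}(x_n,y)/p_t(x,y)$, the denominator being strictly positive precisely because $y\in\mc{P}_{x,t}$. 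This simultaneously gives the finite-dimensional convergence and formula (2). The same computation identifies the restriction of $\p_{x,y}^t$ to $\F_s$, for $s<t$, as the Doob $h$-transform of $\p_x$ with density $p_{t-s}(X_s,y)/p_t(x,y)$; the Chapman--Kolmogorov relations \defin{(CK)} make these restrictions a consistent projective family, so $\p_{x,y}^t$ is a well-defined probability measure on paths with the terminal value $X_t=y$ attached.

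To promote this to weak convergence in the Skorohod $J_1$ topology I would prove tightness of the conditioned laws as $\delta\to0$. Away from the terminal time this is routine: restricted to $[0,t-\eta]$ each conditioned law is absolutely continuous with respect to $\p_x$ with density given by the ratio above, and by \defin{(C)} together with the positivity of $p_t(x,y)$ this density stays controlled on the set of states carrying the $\p_x$-mass, so tightness on $[0,t-\eta]$ is inherited from the tightness of the Feller law $\p_x$.

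The main obstacle is the behaviour on the final window $[t-\eta,t]$: as $s\uparrow t$ the $h$-transform density $p_{t-s}(X_s,y)$ becomes singular (it forces $X_s\to y$), so the uniform control breaks down exactly where it is most needed. The plan is to show, uniformly in $\delta$, that the Skorohod modulus of continuity on $[t-\eta,t]$ is negligible as $\eta\to0$, so that pinning $X_t$ into a shrinking ball does not create oscillation that survives in the limit. One clean route is a time reversal: reversing the bridge turns the delicate terminal time into an initial time governed by the dual semigroup $\hat p_t(y,x)=p_t(x,y)$ (which inherits \defin{(AC)}, \defin{(C)}, \defin{(CK)}), and near a fixed starting point the Feller property supplies the required modulus estimate. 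Making this uniform-in-$\delta$ bound rigorous is the crux of the argument; once it is in hand, tightness together with the finite-dimensional convergence yields the asserted weak convergence.

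It remains to establish (1) and (3). For the regular conditional distribution property (1), I would integrate the family against the law $p_t(x,y)\,\mu(dy)$ of $X_t$ under $\p_x$: formula (2) shows that $\int \p_{x,y}^t(\cdot)\,p_t(x,y)\,\mu(dy)$ reproduces the finite-dimensional distributions of $\p_x$ on $[0,t)$, and a monotone-class argument then identifies $y\mapsto\p_{x,y}^t$ as a version of the conditional law of $X$ on $[0,t]$ given $X_t=y$. For the weak continuity (3), continuity of the finite-dimensional distributions in $\paren{x,y}$ is immediate from \defin{(C)} and from the fact that the normalizer $p_t(x,y)$ stays locally bounded away from $0$ on $\mc{P}_{x,t}$; combining this with tightness estimates that are uniform over $\paren{x',y'}$ in a neighbourhood of $\paren{x,y}$ --- obtained exactly as in the preceding paragraph but carrying the parameters along --- yields weak continuity of $\paren{x,y}\mapsto\p_{x,y}^t$.
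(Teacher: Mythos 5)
This theorem is not actually proved in the paper you are working from: it is quoted verbatim from \cite{MR2789508}, so your proposal must be measured against the argument given there. Structurally you do reproduce its outline: the $h$-transform formula for the finite-dimensional laws, consistency of the projective family via \defin{(CK)}, disintegration for claim (1), and the reduction of claim (3) to continuity of the densities plus locally uniform tightness. One small repair is needed even on $[0,t-\eta]$: the density of the conditioned law on $\F_{t-\eta}$ converges to $\imf{p_\eta}{X_{t-\eta},y}/\imf{p_t}{x,y}$, which is in general \emph{unbounded} as a function of $X_{t-\eta}$, so ``the density stays controlled'' should be replaced by convergence of the densities in $L^1\paren{\p_x}$ (Scheff\'e); uniform integrability, not boundedness, is what yields tightness there.

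The genuine gap is the step you yourself call the crux, and it is not a deferrable detail: the uniform-in-$\delta$ control of the terminal window,
\begin{equation*}
\lim_{\eta\to 0}\,\limsup_{\delta\to 0}\,
\imf{\p_x}{\cond{\sup_{s\in[t-\eta,t]}d\paren{X_s,X_t}>\eps}{X_t\in\ball{\delta}{y}}}=0,
\end{equation*}
where $d$ is the metric on $S$, is exactly what separates this theorem from a routine $h$-transform computation, and the mechanism you propose for it is unavailable under the stated hypotheses. \defin{(AC)}, \defin{(C)}, \defin{(CK)} and the Feller property are assumptions on the \emph{forward} semigroup only; they do not produce a dual process. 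The kernel $\imf{p_t}{x,y}\,\imf{\mu}{dx}$, viewed as a function of its backward variable, need not have total mass one (or even finite mass), need not be Feller, and need not be realized by any c\`adl\`ag Markov process, so the phrase ``near a fixed starting point the Feller property supplies the required modulus estimate'' has nothing to apply to. Moreover, even when a dual process does exist --- as in the applications of the present paper --- the identification of the time reversal of a bridge, or of the ball-conditioned law, with the corresponding conditioned dual law is a priori only an almost-everywhere-in-$(x,y)$ statement; the present paper upgrades it to all $(x,y)$ in the proof of Lemma \ref{ExistenceOfDensitiesForKilledProcess} precisely by invoking the weak continuity of bridges, that is, by invoking the theorem you are trying to prove. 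Using time reversal to establish the endpoint control is therefore circular. A complete proof must extract the terminal-window estimate from the forward data alone --- this is what \cite{MR2789508} has to do, using the Markov property near time $t$ together with \defin{(C)} and \defin{(CK)} to compare the resulting kernels against the positive quantity $\imf{p_t}{x,y}$ --- and that argument, the real content of the theorem, is absent from your proposal.
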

%
\begin{remark*}
The finite-dimensional distributions of the bridge laws can be written
succinctly using the following local absolute continuity condition
valid for $s<t$:
%
\begin{equation}
\label{bridgeLAC} \mathbf{P}_{x,t}^t|_{\F_s}=
\frac{h_{t-s}  ( X_s,y
)}{h_t  ( x,y )}\cdot\mathbf{P}_x|_{\F_s}.
\end{equation}
The reasoning in Revuz and Yor \cite{MR1725357}, Chapter~VIII, implies that for any
stopping time $T$ taking values in $[0,t)$:
%
\begin{equation}
\label{bridgeLACStoppingTime} \mathbf{P}_{x,t}^t|_{\F_T}=
\frac{h_{t-T}  ( X_T,y
)}{h_t  ( x,y )}\cdot\mathbf{P}_x|_{\F_T}.
\end{equation}
\end{remark*}

\section{L\'evy processes and their bridges}
\label{LPSection}
Let $\p_x$ be the Markovian family of a L\'evy process which satisfies
assumptions \defin{(K)} and \defin{(R)}.
As argued by Kallenberg \cite{MR628873}, Fourier inversion implies that $X_t$
possesses continuous and bounded densities which vanish at infinity (by
the Riemann-Lebesgue lemma) for all $t>0$. Actually, \defin{(K)} also
implies that the continuous version $f_t$ of the density of $X_t$ under
$\p_0$ satisfies a form of the Chapman--Kolmogorov equations:
%
\begin{equation}
\label{CKEquations} f_t ( x )=\int f_s ( y
)f_{t-s} ( x-y ) \,\mathrm{d}y \qquad\mbox{for $0<s<t$}.
\end{equation}
From $f_t$ one can build a bi-continuous transition density $p_t$ by
means of $p_t  ( x,y )=f_t  ( y-x )$ which
satisfies \defin{(AC)},
\defin{(C)} and \defin{(CK}).

Under hypotheses \defin{(K)} and \defin{(R)}, Sharpe \cite{MR0240850} shows
that $f_t$ is strictly positive for all $t>0$, which implies that $p_t>0$.

From Theorem \ref{WeaklyContinuousBridgesTheorem}, we see that under
\defin{(K)} and \defin{(R)}, we can consider the bridges $\p_{x,y}^t$
from $x$ to~$y$ of length $t$ for any $x,y\in\re$ and any $t>0$, and
that these are jointly weakly continuous in $x$ and~$y$.

\section{L\'evy processes conditioned to stay positive and meanders}
\label{ConditionedLPAndMeanderSection}
The most general construction for L\'evy processes conditioned to stay
positive, now recalled, is from Chaumont and Doney \cite{MR2164035} (see Chaumont and Doney \cite{MR2375597}
for a correction and Doney \cite{MR2320889} for a lecture note presentation).
When the initial state is positive, it is a Doob transformation of~$\p_x$ by a procedure we now detail. Let
\[
\underline X_t=\min_{s\leq t}X_s
\]
and consider the Markov process $R=X-\underline X$. Under \defin{(R)},
$0$ is regular state of $R$ for itself and so we can consider the local
time at zero $L$ of $R$. We can then define the downwards ladder height
process $H$ of $X$ by
\[
H=-X\circ L,
\]
which is a (possibly killed) subordinator (cf. Bertoin \cite{MR1406564} or
Doney \cite
{MR2320889}). Let $h$ be the renewal function of $H$ given by
\[
h ( x )=\se \biggl( \int_0^\infty
\si_{H_s\leq x} \,\mathrm{d}s \biggr).
\]
For $x>0$, let $\q_x$ be the law of $x+X$ under $\p$ killed when it
leaves $(0,\infty)$, which is a Markov process on $(0,\infty)$ whose
semigroup is denoted $Q= ( Q_t,t\geq0 )$. Chaumont and Doney \cite{MR2164035} prove
that if $X$ drifts to $-\infty$ ($\lim_{t\to\infty}X_t=-\infty$ almost
surely) then $h$ is excessive and otherwise $h$ is invariant for $Q_t$
and proceed to define the semigroup $P^{\uparrow}_t$ by
\[
P_t^{\uparrow} ( x,\mathrm{d}y )=\frac{h  ( y )}{h
( x )}Q_t
( x,\mathrm{d}y )\qquad \mbox{for $x>0$}.
\]
The Markovian laws $\p_x^\uparrow,x>0$, define the L\'evy process
conditioned to stay positive. Note that~$X$ has finite lifetime under
$\p_x^\uparrow$ if and only if $X$ drifts to $-\infty$ under $\p_x$.
Under hypothesis \defin{(R),} Chaumont and Doney \cite{MR2164035} prove that $\p^{\uparrow
}_x$ has a weak limit (in the Skorhod $J_1$ topology) as $x\to0$,
denoted $\p^\uparrow_0$, and that $ ( \p^\uparrow_x
)_{x\geq
0}$ is
Markovian and has the Feller property.

We now give an alternate definition of the meander, from which one can
justify the weak limit construction we have alluded to (cf. Chaumont and Doney \cite{MR2663630}, Lemma 4).
A L\'evy meander is a stochastic process whose law $\p^{\mathrm{me},t}$ satisfies the following absolute continuity relationship with
respect to the law $\p_0^\uparrow$ on $\F_t=\sigma  (
X_s\dvt s\leq t )$:
\[
\p^{\mathrm{me},t}\rrvert_{\F_t}=
\frac{1}{\beta_th  (
X^\uparrow_t )}\cdot\p_0^\uparrow\rrvert_{\F_t}
\qquad\mbox{with } \beta_t= \se^\uparrow_0
\biggl( \frac{1}{h  ( X^\uparrow_t )} \biggr).
\]

\section{Bridges of L\'evy processes conditioned to stay positive}
\label{BridgesOfConditionedLPSection}
We now construct the bridges of a L\'evy process conditioned to stay
positive under hypotheses~\defin{(K)} and \defin{(R)}. This is done
through Theorem \ref{WeaklyContinuousBridgesTheorem} by verifying the
existence of a continuous version of their densities (cf. Lemma \ref
{ExistenceOfDensitiesForLPCSP}). For positive arguments, the density is
constructed from the density of the killed L\'evy process, a continuous
version of which is constructed using bridges of the L\'evy process
itself in Lemma \ref{ExistenceOfDensitiesForKilledProcess}. Then, a
delicate point is to study the densities at $0$; this requires the
following duality lemma.
Let $\hat\p_x$ be the law of $x-X$ and let $\hat P_t$ be its semigroup.
We can also consider the objects $\hat h$, $\hat\p^\uparrow_x$,
etc\ldots associated with $-X$ instead of $X$ as well as $\hat p$.
%
\begin{lemma}
\label{DualityForLPSP}
The semigroups $P_t^\uparrow$ and $\hat P_t^\uparrow$ are in duality
with respect to the measure $\lambda^\uparrow$ given by
\[
\lambda^\uparrow ( \mathrm{d}x )=h ( x )\hat h ( x ) \,\mathrm{d}x.
\]
\end{lemma}
\begin{pf}
Since $P_t$ and $\hat P_t$ are in duality with respect to Lebesgue
measure $\lambda$, it follows that $Q_t$ and $\hat Q_t$ are also in
duality with respect to $\lambda$.

Hence, we get
\begin{eqnarray*}
\int f P^\uparrow_t ( g ) \,\mathrm{d}\lambda^\uparrow &=&
\int f \frac{Q_t  ( gh )}{h} h\hat h \,\mathrm{d}\lambda =\int\hat Q_t ( f
\hat h )g h \,\mathrm{d}\lambda =\int\hat P^\uparrow_t ( f ) g h
\hat h \,\mathrm{d}\lambda \\
&= &\int\hat P^\uparrow_t ( f ) g \,
\mathrm{d}\lambda^\uparrow.
\end{eqnarray*}
\upqed\end{pf}

We now consider the absolute continuity of the semigroup of $X$ killed
when it becomes negative.
%
\begin{lemma}
\label{ExistenceOfDensitiesForKilledProcess}
Under \defin{(K)} and \defin{(R)}, let
%
\begin{equation}
\label{HuntsFormulaForDensity} 
q_t ( x,y )=
\se_{x,y}^t ( \underline X_t>0 )p_t (
x,y ) \qquad\mbox{for $x,y>0$.}
\end{equation}
Then $q_t$ is a transition density for $Q_t$ with respect to Lebesgue
measure which is continuous, strictly positive, bounded by $p$,
satisfies the Chapman--Kolmogorov equations, and which satisfies the
following duality formula:
\[
q_t ( x,y )=\hat q_t ( y,x ).
\]
\end{lemma}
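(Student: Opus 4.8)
The plan is to verify each asserted property of the candidate density $\imf{q_t}{x,y}=\imf{\se_{x,y}^t}{\underline X_t>0}\imf{p_t}{x,y}$ in turn, with the continuity claim being the one genuinely new contribution (Hunt's classical argument yields only semicontinuity). First I would establish that $q_t$ is indeed a transition density for $Q_t$. By definition $\imf{Q_t}{x,A}=\imf{\p_x}{X_t\in A,\ \underline X_t>0}$ for $A\subset(0,\infty)$. Using the fact that $\set{\p_{x,y}^t:y\in\re}$ is a regular conditional distribution for $X$ on $[0,t]$ given $X_t=y$ under $\p_x$ (part (1) of Theorem \ref{WeaklyContinuousBridgesTheorem}), I disintegrate:
\begin{esn}
\imf{\p_x}{X_t\in A,\ \underline X_t>0}
=\int_A \imf{\p_{x,y}^t}{\underline X_t>0}\imf{p_t}{x,y}\, dy
=\int_A \imf{q_t}{x,y}\, dy,
\end{esn}
which is exactly the statement that $q_t$ is a density for $Q_t$ against Lebesgue measure. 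The bound $q_t\leq p_t$ is immediate since $\imf{\se_{x,y}^t}{\underline X_t>0}\in[0,1]$, and the duality formula $\imf{q_t}{x,y}=\imf{\hat q_t}{y,x}$ should follow by applying the Vervaat-type time-reversal identity to the bridge: reversing a bridge of $X$ from $x$ to $y$ gives a bridge of $\hat X=-X$ from $y$ to $x$, under which the event $\set{\underline X_t>0}$ is preserved, combined with the symmetry $\imf{p_t}{x,y}=\imf{f_t}{y-x}=\imf{\hat p_t}{y,x}$.

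\textbf{The hard part} will be continuity of $(x,y)\mapsto\imf{q_t}{x,y}$, and within that, continuity of the map $(x,y)\mapsto\imf{\se_{x,y}^t}{\underline X_t>0}$, since $\imf{p_t}{x,y}$ is already continuous by assumption \defin{(C)}. Here is precisely where weakly continuous bridges earn their keep. By part (3) of Theorem \ref{WeaklyContinuousBridgesTheorem}, $\p_{x',y'}^t\to\p_{x,y}^t$ weakly in $J_1$ as $(x',y')\to(x,y)$. If the functional $\omega\mapsto\indi{\underline{\omega}_t>0}$ were bounded and continuous on Skorohod space at $\p_{x,y}^t$-almost every trajectory, weak convergence would immediately give $\imf{\se_{x',y'}^t}{\underline X_t>0}\to\imf{\se_{x,y}^t}{\underline X_t>0}$. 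The functional $\omega\mapsto\underline\omega_t=\inf_{s\leq t}\omega_s$ is continuous for the $J_1$ topology, so the only obstacle is the discontinuity of the indicator at the boundary value $\underline X_t=0$. Thus the crux is to show that
\begin{esn}
\imf{\p_{x,y}^t}{\underline X_t=0}=0\quad\text{for all }x,y>0,
\end{esn}
so that $\set{\underline X_t>0}$ is a $\p_{x,y}^t$-continuity set and the Portmanteau theorem applies. I would prove this by the local absolute continuity relation \eqref{bridgeLACStoppingTime}: writing $T=\inf\set{s:X_s\leq 0}\wedge(t/2)$ and transferring to $\p_x$, the event $\set{\underline X_t=0}$ forces $X$ to touch but not cross $0$, and under \defin{(R)} the point $0$ is regular for $(-\infty,0)$, so the process cannot have a strict infimum attained at level $0$ without immediately going negative — hence this event is $\p_x$-null, and local absolute continuity transfers nullity to $\p_{x,y}^t$.

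Once continuity is in hand, the \textbf{remaining properties} follow with little effort. The Chapman--Kolmogorov equations for $q_t$ are a consequence of the Markov property of the killed process $Q_t$ together with the now-established continuity (continuity lets me pass from the measure-theoretic identity, which holds for a.e.\ triple, to the everywhere identity). Finally, strict positivity $q_t>0$ on $(0,\infty)^2$ is the statement that $\imf{\p_{x,y}^t}{\underline X_t>0}>0$, since $p_t>0$ everywhere by \cite{MR0240850}. I expect to deduce this from the cyclic exchangeability / support properties of bridges: intuitively, a bridge from $x>0$ to $y>0$ has positive probability of staying in a small tube around a positive path, and the precise argument can lean on the approximation $\p_{x,y}^t=\lim_{\delta\to 0}\imf{\p_x}{\cdot\mid X_t\in\imf{B_\delta}{y}}$ from Theorem \ref{WeaklyContinuousBridgesTheorem} together with the fact that $\imf{\p_x}{\underline X_t>0,\ X_t\in\imf{B_\delta}{y}}>0$ for each $\delta$, which holds because the killed process is irreducible on $(0,\infty)$ under \defin{(R)}.
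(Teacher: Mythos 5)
Most of your proposal tracks the paper's own proof: the disintegration identifying $q_t$ as a transition density for $Q_t$, the bound $q\leq p$, the portmanteau argument for continuity (with the boundary event $\set{\underline X_t=0}$ shown to be null by transferring to $\p_x$ through local absolute continuity and using regularity of $0$ for $(-\infty,0)$), the duality formula via time reversal of bridges (the paper is more careful here: the reversal identity and $\imf{p_t}{x,y}=\imf{\hat p_t}{y,x}$ hold a priori only almost everywhere, and the exceptional sets are removed by continuity and weak continuity of bridges), and the a.e.-to-everywhere upgrade of Chapman--Kolmogorov. Two minor points: your stopping time $T=\inf\set{s:X_s\leq 0}\wedge(t/2)$ only controls touches of zero before time $t/2$, so you must let the cap increase to $t$ (or take a countable union of $\F_u$-events, $u<t$); and the a.e.-to-everywhere step for Chapman--Kolmogorov needs the continuity of $z\mapsto \int \imf{q_s}{x,y}\imf{q_t}{y,z}\,dy$, which the paper gets from the domination $q\leq p$ and generalized dominated convergence. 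Both are fixable at the level of detail you are working at.

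The genuine gap is in strict positivity. You propose to deduce $\imf{\p^t_{x,y}}{\underline X_t>0}>0$ from the weak convergence $\imf{\p_x}{\cdot\mid X_t\in\ball{\delta}{y}}\to\p_{x,y}^t$ together with $\imf{\p_x}{\underline X_t>0,\ X_t\in\ball{\delta}{y}}>0$ for every $\delta>0$. This inference is invalid: positivity of each term of a convergent sequence says nothing about positivity of the limit. Indeed, by the continuity you have already established, $\imf{\p_x}{\cond{\underline X_t>0}{X_t\in\ball{\delta}{y}}}=\int_{\ball{\delta}{y}}\imf{q_t}{x,z}\,dz\,\big/\int_{\ball{\delta}{y}}\imf{p_t}{x,z}\,dz \to \imf{q_t}{x,y}/\imf{p_t}{x,y}$, and nothing in your argument excludes that $q_t(x,\cdot)$ vanishes exactly at $y$ while being positive on a set meeting every ball --- which is all that positivity of the pre-limit quantities gives. (Moreover, the ``irreducibility'' you invoke to justify those pre-limit positivities is itself the thing to be proved; it is precisely the paper's statement \eqref{KilledSemigroupHasFullSupport}, established there by the cyclic exchangeability argument of Chaumont.) The paper needs two further ideas to close this: first, by the duality formula and the short-time behavior of the dual process started at $y$, namely $\imf{\hat Q_s}{y,\ball{\delta}{y}}\geq \imf{\hat\p_y}{X_r\in\ball{\delta}{y}\text{ for all }r\leq s}>0$ for small $s$, continuity of $q_s$ yields an open set $U_s\subset\ball{\delta}{y}$ on which $\imf{q_s}{\cdot,y}=\imf{\hat q_s}{y,\cdot}>0$; second, Chapman--Kolmogorov converts full support into pointwise positivity: $\imf{q_t}{x,y}\geq\int_{U_s}\imf{Q_{t-s}}{x,dz}\imf{q_s}{z,y}>0$. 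Without an argument of this kind (or the alternative route mentioned in the paper's remark), your proposal stops short of the pointwise positivity claimed in the lemma.
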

%
\begin{remark*}
It is simple to see that the absolute continuity of $P_t  (
x,\cdot  )$ translates into absolute continuity of $Q_t  (
x,\cdot )$ since if
$A$ has Lebesgue measure zero then
\[
Q_t ( x,A )=\p_x ( X_t\in A, \underline
X_t>0 )\leq\p_x ( X_t\in A )=0.
\]
What is more difficult, is to see that the $q$ is strictly positive;
similar results have been obtained in the literature for killed
(multidimensional) Brownian motion and stable L\'evy processes in Hunt~\cite{MR0079377},
Chen and Song \cite{MR1473631},
Vondra{\v{c}}ek \cite{MR1918106}. Our proof of uses the weakly
continuous Markovian bridges provided by Theorem \ref
{WeaklyContinuousBridgesTheorem}. The almost sure positivity of $q$ can
also be obtained from Theorem 4 of Pitman and Uribe~Bravo \cite{10113069}.
\end{remark*}
\begin{pf*}{Proof of Lemma \ref{ExistenceOfDensitiesForKilledProcess}}
Conditioning on $X_t$, we see that
\begin{eqnarray*}
\se_x \bigl( \si_{\underline X_t>0}f ( X_t ) \bigr) &=&
\se_x \bigl[\p_{x,X_t}^t ( \underline
X_t>0 )f ( X_t ) \bigr]
\\
&=&\int
\p_{x,y}^t ( \underline X_t>0
) f ( y )%
p_t ( x,y ) \lambda ( \mathrm{d}y )
\end{eqnarray*}
for measurable and bounded $f$. On the other hand, the definition of
the law $\q_t$ gives
\[
\se_x \bigl( \si_{\underline X_t>0}f ( X_t ) \bigr)=
\q_x \bigl( f ( X_t ) \bigr)=\int f ( y ) Q_t
( x,\mathrm{d}y )
\]
so that $q$ is a transition density of $Q$ with respect to Lebesgue measure.

We know that $p$ is continuous. To see that $q$ is continuous, it
suffices to apply the portemanteau theorem. Note that the boundary
$\partial \{ \underline X_t>0 \}$ of $ \{ \underline
X_t >0 \}$ has
$\p_{x,y}^t$-measure zero. Indeed, since the minimum on $[0,t]$ is a
continuous functional on Skorohod space (cf. Whitt \cite{MR1876437}, Section 13.4):
\[
\partial \{ \underline X_t>0 \}\subset \bigl\{ X_s\geq 0
\mbox{ for all }s\in[0,t]\mbox{ and there exists }s\in[0,t]\mbox{ such that
}X_s=0 \bigr\}. 
\]
Since $x,y>0$ and under $\p_{x,y}^t$ we have $X_{0+}=x$ and $X_{t-}=y$
almost surely, we see that the process cannot touch zero at times $0$
or $t$. However, using the local absolute continuity relationship~\eqref
{bridgeLACStoppingTime} at the first time $T$ such that $X_{T}=0$, we
see that $\p_{x,y}^t  ( \partial \{ \underline X_t>0
\}=0 )$ as
soon as
\[
\p_x \bigl( \mbox{Touching zero on $(0,t)$ and staying nonnegative}
\bigr)=0,
\]
which is true since $0$ is regular for $(-\infty,0)$.

To prove the duality formula for $q$, we first Proposition II.1 of
Bertoin \cite
{MR1406564}, which proves that $p_t  ( x,y )=\hat p_t
( y,x )$ for
almost all $x$ and $y$ and remove the almost all qualifier by
continuity. Next, Corollary II.3\vspace*{-1pt} of Bertoin \cite{MR1406564} proves that for
almost all $x$ and $y$ the image of $\p_{x,y}^t$ under the time
reversal operator is $\hat\p_{y,x}^t$, which by weak continuity of
bridge laws can be extended to every $x$ and $y$. Since the event
$\underline X_t$ is invariant under time reversal, we see that
\[
q_t ( x,y )=\se_{x,y}^t ( \underline
X_t>0 )p_t ( x,y )=\hat\se_{y,x}^t (
\underline X_t>0 )\hat p_t ( y,x )=\hat q_t (
y,x ).
\]

By definition, we see that $q\leq p$ almost everywhere, and so
continuity implies that $q$ is bounded by $p$ everywhere; this will
help us prove that $q$ satisfies the (CK) equations. Indeed, the Markov
property implies that
%
\begin{equation}
\label{AlmostSureCKForq} q_{t+s} ( x,z )=\int q_s ( x,y
)q_t ( y,z ) \lambda ( \mathrm{d}y )\qquad\mbox{for $\lambda$-almost all
$z$}.
\end{equation}
Since
\[
0\leq q_s ( x,y )q_t ( y,z )\leq p_s ( x,y
)p_t ( y,z )
\]
and
\[
\int p_s ( x,y )p_t ( y,z ) \lambda ( \mathrm{d}z
)=p_{t+s} ( x,z ),
\]
which is continuous in $z$, the generalized dominated convergence
theorem tells us that
\[
z\mapsto\int q_s ( x,y )q_t ( y,z ) \lambda (
\mathrm{d}y )
\]
is continuous (on $(0,\infty)$). Because both sides of \eqref
{AlmostSureCKForq} are continuous, we can change the almost sure
qualifier to for all $z$.

It remains to see that $q_t  ( x,y )>0$ if $x,y,t>0$. 
We first prove that for any $x,y,t>0$, if $\delta>0$ is such that
$B_{\delta}  ( y )\subset(0,\infty)$, then
%
\begin{equation}
\label{KilledSemigroupHasFullSupport} Q_t \bigl( x,B_{\delta}
( y ) \bigr)>0.
\end{equation}
This is done by employing a technique of Knight \cite{MR1417982}. For any
$s\in(0,t)$, consider the process $ ( X^s_r,r\leq t )$
given by
\[
X^s_r= X_0+X_{ ( r+s )\bmod t}-X_s.
\]
Since $X$ has independent and stationary increments, then, for any
fixed $s$, the laws of $X^s$ and $ ( X_r,r\leq t )$ coincide under
$\p_x$ for any $x\in\re$; this is referred to as the cyclic
exchangeability property in Chaumont, Hobson and Yor \cite{MR1837296}. Note that $X^s_t=X_t$; if
$s$ is close to the place where~$X$ reaches its minimum on $(0,t)$,
then the minimum $\underline X^s_t$ of $X^s$ on the interval $[0,t]$ is
positive. Hence, the random variable
\[
I=\int_0^t \si_{\underline X^s_t>0, X_t\in B_{\delta}  (
y )} \,\mathrm{d}s
\]
is positive on $ \{ X_t\in B_{\delta}  ( y ) \}
$ which has positive
probability since $p_t$ is strictly positive. On the other hand, from
cyclic exchangeability, we can compute:
\[
0<\se_x ( I ) =\int_0^t
\p_x \bigl[\underline X^s_t>0,
X^s_t\in B_{\delta } ( y ) \bigr] \,\mathrm{d}s =t
\p_x \bigl[\underline X_t>0, X_t\in
B_{\delta} ( y ) \bigr] =tQ_t \bigl( x,B_{\delta} ( y )
\bigr),
\]
which proves \eqref{KilledSemigroupHasFullSupport}. To prove positivity
of $q_t$, first note that since $\hat\p_y$ almost surely $X_{0+}=y$,
then for $s$ small enough:
\[
\hat Q_s \bigl( y,B_{\delta} ( y ) \bigr) =\hat
\p_y \bigl( X_s\in B_{\delta} ( y ),\underline
X_s>0 \bigr)\geq\hat \p_y \bigl( X_r\in
B_{\delta} ( y )\mbox{ for all }r\in[0,s] \bigr)>0,
\]
so that, by continuity of $q_s$, there exists an open subset $U_s$ of
$B_{\delta}  ( y )$ such that $q_s  ( \cdot,y
)=\hat q_s  ( y,\cdot  )>0$ on $U_s$. By
Chapman--Kolmogorov and \eqref
{KilledSemigroupHasFullSupport}, we see that
\[
q_t ( x,y )\geq\int_{U_s} Q_{t-s} ( x,
\mathrm{d}z )q_s ( z,y )>0.
\]
\upqed\end{pf*}

We now turn to a similar result for L\'evy processes conditioned to
stay positive.
%
\begin{lemma}
\label{ExistenceOfDensitiesForLPCSP}
Under \defin{(K)} and \defin{(R)},
$P^\uparrow_t  ( x,\cdot )$ is equivalent to Lebesgue
measure for all
$t>0$ and $x\geq0$. Furthermore, there exists a version of the
transition density $p^\uparrow$ which is continuous, strictly positive,
and satisfies the Chapman--Kolmogorov equations.
\end{lemma}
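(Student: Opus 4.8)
The plan is to produce the density of $P^\uparrow_t$ in two stages — first for interior starting points $x>0$, where it is a Doob $h$-transform of the killed density $q_t$ supplied by Lemma \ref{ExistenceOfDensitiesForKilledProcess}, and then for the entrance point $x=0$, where it is built from an entrance law — and finally to glue the two together by proving joint continuity up to the boundary. Throughout I will use the classical fact that under \defin{(R)} the renewal function $h$ is continuous, strictly increasing, strictly positive on $(0,\infty)$, and vanishes at $0$.

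For $x>0$ the definition $\imf{P^\uparrow_t}{x,dy}=\frac{\imf{h}{y}}{\imf{h}{x}}\imf{Q_t}{x,dy}$ together with Lemma \ref{ExistenceOfDensitiesForKilledProcess} shows at once that $P^\uparrow_t(x,\cdot)$ has the density
\begin{equation*}
p^\uparrow_t(x,y)=\frac{\imf{h}{y}}{\imf{h}{x}}\,\imf{q_t}{x,y}\qquad (x,y>0).
\end{equation*}
Since $q_t$ is continuous and strictly positive and $h$ is continuous and positive on $(0,\infty)$, the same holds for $p^\uparrow_t$ on $(0,\infty)\times(0,\infty)$; existence and strict positivity of the density give the equivalence of $P^\uparrow_t(x,\cdot)$ with Lebesgue measure; and the Chapman--Kolmogorov equations pass from $q$ to $p^\uparrow$ because the $h$-factors telescope, $\int p^\uparrow_s(x,y)p^\uparrow_t(y,z)\,dy=\frac{\imf{h}{z}}{\imf{h}{x}}\int \imf{q_s}{x,y}\imf{q_t}{y,z}\,dy=\frac{\imf{h}{z}}{\imf{h}{x}}\imf{q_{s+t}}{x,z}=p^\uparrow_{s+t}(x,z)$.

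For $x=0$ I will use that, under \defin{(R)}, $\p^\uparrow_0$ is the weak limit of the $\p^\uparrow_x$ and the canonical process enters $(0,\infty)$ immediately, so that $P^\uparrow_s(0,\cdot)$ is carried by $(0,\infty)$ for every $s>0$. Fixing $s\in(0,t)$ I set
\begin{equation*}
p^\uparrow_t(0,z)=\int_{(0,\infty)}\imf{P^\uparrow_s}{0,dy}\,p^\uparrow_{t-s}(y,z),
\end{equation*}
which the Markov property under $\p^\uparrow_0$ and Fubini identify as a density of $P^\uparrow_t(0,\cdot)$. Finiteness and continuity come from the integrability built into the meander normalization: bounding $\imf{q_{t-s}}{y,z}\le \imf{p_{t-s}}{y,z}\le \sup_w \imf{f_{t-s}}{w}$ gives $p^\uparrow_{t-s}(y,z)\le \imf{h}{z}\,\sup_w\imf{f_{t-s}}{w}/\imf{h}{y}$, whence the integral is at most $\imf{h}{z}\,\sup_w\imf{f_{t-s}}{w}\,\se^\uparrow_0\!\paren{1/\imf{h}{X_s}}=\imf{h}{z}\,\sup_w\imf{f_{t-s}}{w}\,\beta_s<\infty$; monotonicity of $h$ furnishes a dominating function on compact neighbourhoods of $z$, so dominated convergence yields continuity in $z$, and positivity is clear since every factor is positive. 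Different choices of $s$ give continuous versions agreeing almost everywhere, hence everywhere, so $p^\uparrow_t(0,\cdot)$ is well defined, and Chapman--Kolmogorov extends to $x=0$ by the semigroup property.

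The remaining, and main, difficulty is joint continuity up to the boundary, i.e. $p^\uparrow_t(x,z)\to p^\uparrow_t(0,z)$ as $x\downarrow 0$. I would start from the identity $p^\uparrow_t(x,z)=\int \imf{P^\uparrow_s}{x,dy}\,p^\uparrow_{t-s}(y,z)$, valid for every $x\ge 0$, and pass to the limit using the weak convergence $P^\uparrow_s(x,\cdot)\Rightarrow P^\uparrow_s(0,\cdot)$ afforded by the Feller property of $(\p^\uparrow_x)_{x\ge0}$. The test function $g(y)=p^\uparrow_{t-s}(y,z)$ is continuous on $(0,\infty)$ and tends to $0$ as $y\to\infty$ (because $f_{t-s}$ vanishes at infinity while $\imf{h}{y}\to\infty$), so the only obstruction is its possible blow-up as $y\to 0$. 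The device that controls this is the exact identity
\begin{equation*}
\int_{(0,\infty)}\frac{1}{\imf{h}{y}}\,\imf{P^\uparrow_s}{x,dy}=\frac{\imf{\p_x}{\underline X_s>0}}{\imf{h}{x}},
\end{equation*}
coming from the $h$-transform definition and $\int \imf{q_s}{x,y}\,dy=\imf{\p_x}{\underline X_s>0}$, whose right-hand side equals $\beta_s$ at $x=0$; showing it stays bounded as $x\downarrow 0$ — a boundary estimate from the fluctuation theory of the process conditioned to stay positive — provides the uniform integrability of $1/h$ needed to neutralize the singularity at $y=0$. Splitting $g$ over $(0,\eps)$, $[\eps,1/\eps]$ and $(1/\eps,\infty)$ then lets weak convergence dispatch the compact middle piece while the two tails are made uniformly small. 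I expect this last boundary analysis to be the crux; by contrast, the interior statement and the entrance-law construction are routine transfers of the properties of $q_t$ through the $h$-transform, and together with the Feller property they are exactly what is needed to invoke Theorem \ref{WeaklyContinuousBridgesTheorem}.
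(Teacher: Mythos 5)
Your construction for $x,y>0$ and your entrance law at $x=0$ coincide with the paper's (the paper merely writes the density with respect to $\lambda^\uparrow(dy)=h(y)\hat h(y)\,dy$ instead of Lebesgue measure), but there are two genuine gaps. The structural one: you never define the density at the terminal boundary $y=0$, nor at the corner $(0,0)$, and in your normalization this cannot be done. Under \defin{(R)} one has $h(0+)=\hat h(0+)=0$ (as you note yourself), while $q_t(x,y)\leq \sup_w f_t(w)<\infty$ by \defin{(K)}, so your Lebesgue density $h(y)q_t(x,y)/h(x)$ tends to $0$ as $y\to 0$: any continuous version vanishes at $y=0$, and ``continuous and strictly positive'' is unattainable on $[0,\infty)\times[0,\infty)$ with Lebesgue as reference measure. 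But the lemma must feed Theorem \ref{WeaklyContinuousBridgesTheorem} on the state space $[0,\infty)$, so that Theorem \ref{WeaklyContinuousBridgeLPCSP} yields bridges $\p^{\uparrow,t}_{x,y}$ for \emph{all} $x,y\geq 0$, above all $\p^{\uparrow,t}_{0,0}$, the normalized excursion, which is the object the paper is after. This is exactly why the paper takes $\lambda^\uparrow$ as reference measure and proves the duality Lemma \ref{DualityForLPSP}: in that normalization $p^\uparrow_t(x,y)=q_t(x,y)/(h(x)\hat h(y))$ satisfies $p^\uparrow_t(x,y)=\hat p^\uparrow_t(y,x)$, so the entrance-law construction you carried out at $x=0$, applied to the dual process, defines $p^\uparrow_t(x,0)$; a further Chapman--Kolmogorov and dominated-convergence argument then defines $p^\uparrow_t(0,0)$ and gives joint continuity as both arguments tend to $0$. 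This entire half of the paper's proof is absent from your proposal.

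The second gap sits at the step you yourself call the crux. Boundedness of $\p_x(\underline X_s>0)/h(x)=\se^\uparrow_x(1/h(X_s))$ as $x\downarrow 0$ does \emph{not} ``provide the uniform integrability of $1/h$'': the measures $P^\uparrow_s(x,\cdot)$ could place $1/h$-mass near the boundary (say mass $h(\delta_x)$ at points $\delta_x\downarrow 0$), keeping these integrals bounded while the $(0,\eps)$ piece of your splitting contributes an error of order one. What closes the argument is either (i) genuine convergence $\se^\uparrow_x(1/h(X_s))\to\beta_s$ as $x\to 0$, i.e.\ continuity at $0$ of this map, which is precisely what the paper imports from Corollary 1 of \cite{MR2375597}; or (ii) the paper's detour, which avoids uniform integrability altogether: local boundedness of $x\mapsto\se^\uparrow_x(1/h(X_s))$ near $0$, together with the decay of $p^\uparrow_{t-s}(\cdot,z)$ at infinity, first gives the uniform bound $\sup_{y\geq 0}p^\uparrow_{t-s}(y,z)<\infty$, after which weak convergence applies directly to a test function that is bounded and continuous off the $P^\uparrow_s(0,\cdot)$-null set $\{0\}$. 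Either way, the required input is a nontrivial fluctuation-theoretic fact that you neither prove nor cite; as written, the boundary analysis you correctly identify as the heart of the lemma is not actually carried out.
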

Therefore, the density $p^\uparrow$ satisfies the assumptions \defin
{(AC), (C)} and \defin{(CK)} of Theorem~\ref{WeaklyContinuousBridgesTheorem}.
\begin{pf*}{Proof of Lemma \ref{ExistenceOfDensitiesForLPCSP}}
Since the renewal function of a subordinator is positive, continuous
and finite, we deduce by $h$-transforms and Lemma \ref
{ExistenceOfDensitiesForKilledProcess} that the function
\[
p^\uparrow_t ( x,y )=\frac{q_t  ( x,y )}{h
( x )\hat h  ( y )},\qquad  x>0,y>0,t>0,
\]
is a transition density for $P^\uparrow$ starting at positive states:
\[
P^\uparrow_tf ( x )=\int p^\uparrow_t ( x,y
)f ( y ) \lambda^\uparrow ( \mathrm{d}y ) \qquad\mbox{for $x>0$.}
\]
Notice that $p^\uparrow$ so defined is strictly positive, continuous,
and satisfies the Chapman--Kolmogorov equations.

For $0<s<t$, consider the function
\[
p^{\uparrow s}_t ( y )=\int P^\uparrow_s ( 0,
\mathrm{d}x )p^\uparrow_{t-s} ( x,y )>0 \qquad\mbox{for } y>0.
\]
On one hand, Chapman--Kolmogorov implies that for any bounded measurable $f$:
\[
\int p^{\uparrow s}_t ( y )f ( y ) \lambda^\uparrow (
\mathrm{d}y )=\int\!\!\int P^\uparrow_s ( 0,\mathrm{d}x
)p^\uparrow_{t-s} ( x,y )f ( y ) \lambda^\uparrow (
\mathrm{d}y )=\int P^{\uparrow }_t ( 0,\mathrm{d}y )f ( y ),
\]
so that $p^{\uparrow s}_t$ is a version of the density of $P_t^\uparrow
( 0,\cdot )$ with respect to $\lambda^\uparrow$ and so if
$0<s<s'<t$ then $p_t^{\uparrow s}  ( y )=p_t^{\uparrow s'}
( y )$
for $\lambda$-almost all $y$. On the other hand, we now see that
$p_t^{\uparrow s}  ( y )$ is a continuous function of $y$,
so that
actually, the almost sure qualifier can be dropped. Indeed, since
\[
M_{t-s}:=\sup_{x,y}p_{t-s} ( x,y )<\infty
\]
and from Chaumont and Doney \cite{MR2164035}
\[
\beta_s:=\p_0^\uparrow \bigl( 1/h (
X_s ) \bigr)<\infty,
\]
continuity of $p^{\uparrow s}_t$ follows from the dominated convergence theorem.

We can now define
\[
p^\uparrow_t ( 0,y )=p^{\uparrow s}_t ( y ),\qquad y>0,
\]
for any $s\in(0,t)$. Since $p^\uparrow_t  ( 0,y )$ is
continuous for
$y\in(0,\infty)$, and is a version of the density of $P^\uparrow_t
( 0,\cdot )$,
the Markov property implies:
\[
p^\uparrow_{t+s} ( 0,y )=\int p^\uparrow_t (
0,x )p^\uparrow_s ( x,y ) \lambda^\uparrow ( \mathrm{d}x
).
\]
Furthermore, we have the bound
\[
p^\uparrow_t ( 0,y )\leq\beta_s M_{t-s}/
\hat h ( y )\qquad \mbox{for }y>0.
\]
We now provide an uniform bound for the transition density in the
initial state. Recall that $p_t  ( x,y )\to0$ as $x\to
\infty$. Since
$q_t\leq p_t$, then $p_t^\uparrow  ( x,y )\to0$ as $x\to
\infty$ for
any $t>0$ and $y>0$. Choose now any $s\in(0,t)$. By Chapman--Kolmogorov:
\begin{eqnarray*}
p^{\uparrow}_t ( x,z ) &=&\int p^\uparrow_s (
x,y )p^{\uparrow}_{t-s} ( y,z ) \lambda^\uparrow ( \mathrm{d}y
)
\\
&\leq&\int p^\uparrow_s ( x,y )\frac{M_{t-s}}{h
( y )\hat h  ( z )}
\lambda^\uparrow ( \mathrm{d}y ) 
\\
&\leq&\se_x^\uparrow \biggl( \frac{1}{h  ( X_s
)} \biggr)
\frac
{M_{t-s}}{\hat h  ( z )}.
\end{eqnarray*}
Note that $x\mapsto\se_x^\uparrow  ( \frac{1}{h  (
X_s )} )$ is
continuous on $(0,\infty)$, hence bounded on compact subsets of
$(0,\infty)$. Continuity at zero is proved in Corollary 1 of Chaumont and Doney \cite
{MR2375597}. Hence, we obtain
\[
\sup_{x\geq0}p_t^\uparrow ( x,y )<\infty\qquad\mbox{for all
$y>0$ and $t>0$}.
\]

We now prove that
%
\begin{equation}
\label{ContinuityAssertionForDensityOfLPCSP} \lim_{x\to0}p^\uparrow_t
( x,y )=p^\uparrow_t ( 0,y ) \qquad\mbox {for }y>0.
\end{equation}
Indeed, from the Chapman--Kolmogorov equations
\[
p^\uparrow_{t} ( x,z )=\int p^\uparrow_{t-s} (
y,z ) P^\uparrow_s ( x,\mathrm{d}y ).
\]
Note that $P_s^\uparrow  ( x,\cdot )$ converges weakly to
$P_s^\uparrow  ( 0,\cdot )$ as $x\to0$ and that
$p^\uparrow_{t-s}  ( \cdot,z )$ is continuous and
bounded on $(0,\infty)$, which is
the support of $P_s^{\uparrow}  ( 0,\cdot )$.

By applying the above arguments to the dual process, we can define
$p^\uparrow_t  ( x,0 )$ as $\hat p^\uparrow_t  (
0,x )$ and note that
\[
\lim_{y\to0}p^\uparrow_t ( x,y )=p^\uparrow_t
( x,0 ) \qquad\mbox {for }x>0.
\]
We can now define
\[
p_t^{\uparrow,s} ( 0,0 )=\int p^\uparrow_s ( 0,y
)p^\uparrow_{t-s} ( y,0 ) \lambda^\uparrow ( \mathrm{d}y ).
\]
To show that the above definition does not depend on $s$, we now show that
$\lim_{z\to0}p^\uparrow_t  ( 0,z )=p_t^{\uparrow ,s}
( 0,0 )$. By
Chapman--Kolmogorov, we get
\[
p^\uparrow_t ( 0,z )=\int p^\uparrow_s ( 0,y
)p^\uparrow_{t-s} ( y,z ) \lambda^\uparrow ( \mathrm{d}y ).
\]
We know that $p^\uparrow_{t-s}  ( y,z )$ converges to
$p^\uparrow_{t-s}  ( y,0 )$ as $z\to0$. Dominated
convergence, which applies because
of the bound
\[
p^\uparrow_{t-s} ( y,z )\leq C/h ( y ),
\]
then implies
\[
\lim_{z\to0}p^\uparrow_t ( 0,z )=\int
p^\uparrow_s ( 0,y )p^\uparrow_{t-s} ( y,0 )
\lambda^\uparrow ( \mathrm{d}y ),
\]
which shows that we can define $p_t^\uparrow  ( 0,0
)=p_t^{\uparrow,s}  ( 0,0 )$, and we have
\[
\lim_{z\to0}p^\uparrow_t ( 0,z )=p^\uparrow_t
( 0,0 )\qquad\mbox {and by duality}\quad\lim_{x\to0}p^\uparrow_t
( x,0 )=p^\uparrow_t ( 0,0 ).
\]

Finally, we will prove that
\[
\lim_{x,z\to0}p^\uparrow_t ( x,z )=p_t ( 0,0
).
\]
Take $x_n,z_n\to0$ and write
\begin{eqnarray*}
&&\limsup_n \bigl\llvert p^\uparrow_t (
x_n,z_n )-p^\uparrow_t ( 0,0 ) \bigr
\rrvert \\
&&\quad\leq\limsup_n \bigl\llvert p^\uparrow_t (
x_n,z_n )-p^\uparrow_t (
x_n,0 ) \bigr\rrvert +\limsup_n \bigl\llvert
p^\uparrow_t ( x_n,0 )-p^\uparrow_t
( 0,0 ) \bigr\rrvert
\\
&&\quad\leq\limsup_n\int P^\uparrow_s ( x_n,
\mathrm{d}y ) \bigl\llvert p^\uparrow_{t-s} ( y,z_n
)-p^\uparrow_{t-s} ( y,0 ) \bigr\rrvert .
\end{eqnarray*}
Since $P^\uparrow_s  ( x_n,\cdot )$ weakly to
$P^\uparrow_s  ( 0,\cdot )$ and
\[
\bigl\llvert p^\uparrow_{t-s} ( y,z_n
)-p^\uparrow_{t-s} ( y,0 ) \bigr\rrvert \leq C/h ( y ),
\]
where $C$ is a finite constant, we obtain the desired result.
\end{pf*}
The main result of this section is the construction of weakly
continuous bridges for the L\'evy process conditioned to stay positive.
Indeed, by applying Theorem \ref{WeaklyContinuousBridgesTheorem} and
Lemma \ref{ExistenceOfDensitiesForLPCSP}, we obtain Theorem \ref
{WeaklyContinuousBridgeLPCSP}.

The proof of Corollary \ref{DIMCorollary} is simple from Theorem \ref
{WeaklyContinuousBridgeLPCSP} and the following remarks. First, we
note that the finite-dimensional distributions of the bridges $\p_{x,y}^{\uparrow,t}$ and $\q_{x,y}^t$ are identical if $x,y,t>0$
(because we have an $h$-transform relationship between $\q_x$ and $\p^\uparrow_x$ for $x>0$). Next, note that the law of $X-\eps$ under
$\q_{\eps,\eps}^{t}=\p^{\uparrow,t}_{\eps,\eps}$ is precisely that
of $\p_{0,0}^t$ conditioned on $\underline X_t>-\eps$. Finally, since the
laws $\p_{x,y}^{\uparrow, t}$ are weakly continuous, Corollary \ref
{DIMCorollary} is established.

\section{An extension of the Denisov decomposition of the Brownian trajectory}
\label{DenisovSection}
We now turn to the extension of the Denisov decomposition of the
Brownian trajectory of Theorem \ref{DenisovTheorem}.
\begin{pf*}{Proof of Theorem \ref{DenisovTheorem}}
We will use Lemma 4 in Chaumont and Doney \cite{MR2663630}, which states that if $x_n\to
0$ and $t_n\to t>0$ then the law of $ ( X_s,s\leq t_n )$
conditionally on $\underline X_{t_n}>0$ under $\p_{x_n}$ converges as
$n\to\infty$ in the sense of finite-dimensional distributions to $\p^{\mathrm{me},t}$ when $0$ is regular for $(0,\infty)$. (This was only stated in
Chaumont and Doney \cite{MR2663630} for fixed $t$ and follows from Corollary 2 in Chaumont and Doney \cite
{MR2164035}. However, the arguments, which are actually found in Chaumont and Doney \cite
{MR2375597}, also apply in our setting.)

Fix $t>0$. Since $0$ is regular for both half-lines, $X$ reaches its
minimum $\underline X_t$ on the interval $[0,t]$ continuously at an
unique place $\rho_t$, as proved in Propositions 2.2 and 2.4 of Millar \cite
{MR0433606}.
Let
\[
\rho_t^n= \bigl\lfloor\rho_t 2^n
\bigr\rfloor/2^n
\]
and note that
\[
\underline X_t=\min_{s\in[\rho_t^n, \rho_t^n+1/2^n]}X_s.
\]
For continuous and bounded $f\dvt \re\to\re$ and functions $F$ of $G$
of the form $h  ( X_{t_1},\ldots, X_{t_m} )$ for some
$t_1,\ldots
,t_m\geq0$ and continuous and bounded $h$, we will compute the quantity
\[
\se_0 \bigl( F ( X_{\cdot\wedge\rho_n} )f ( \rho_n )G (
X_{ ( \rho_n+1/2^n+\cdot )\wedge
t}-X_{\rho_n} ) \bigr).
\]
This is done by noting the decomposition
\[
\bigl\{ \rho_t^n=k/2^n \bigr
\}=A_{k,n}\cap B_{k,n},
\]
where
\begin{eqnarray*}
A_{k,n}&=& \bigl\{ m_{k,n} \leq X_s\mbox{ for }s
\leq k/2^n \bigr\},
\\
B_{k,n}&=& \bigl\{ m_{k,n} \leq X_s\mbox{ for }s\in
\bigl[ ( k+1 )/2^n,t \bigr] \bigr\}
\end{eqnarray*}
and
\[
 m_{k,n} =\inf_{r\in[ k/2^n, ( k+1 )/2^n]}X_r.
\]
Applying the Markov property at time $(k+1)/2^n$ we obtain
\begin{eqnarray*}
&&\se_0 \bigl( F ( X_{\cdot\wedge\rho_n} )f ( \rho_n )G (
X_{ ( \rho_n+1/2^n+\cdot )\wedge
t}-m_{k,n} )\si_{\rho_n=k/2^n} \bigr)
\\
&&\quad=\se_0 \bigl( F ( X_{\cdot\wedge\rho_n} )f ( \rho_n ) H
\bigl( t- ( k+1 )/2^n,X_{ (
k+1 )/2^n},\underline X_{ ( k+1 )/2^n}
\bigr)\si_{A_{k,n}} \bigr),
\end{eqnarray*}
where
\[
H ( s,x,y )=\se_x \bigl( G \bigl( X^{s}-y \bigr)
\si_{\underline X_s>y} \bigr)=\se_{x-y} \bigl( G \bigl( X^{s}
\bigr)\si_{\underline X_s>0} \bigr).
\]
By reversing our steps, we obtain
\begin{eqnarray*}
&&\se_0 \bigl( F ( X_{\cdot\wedge\rho_n} )f ( \rho_n ) H
\bigl( t- ( k+1 )/2^n,X_{ (
k+1 )/2^n},\underline X_{ ( k+1 )/2^n}
\bigr)\si_{A_{k,n}} \bigr)
\\
&&\quad= \se_0 \bigl( F ( X_{\cdot\wedge\rho_n} )f ( \rho_n )
\tilde H \bigl( t- ( k+1 )/2^n,X_{ (
k+1 )/2^n},\underline
X_{ ( k+1 )/2^n} \bigr)\si_{\rho_t^n=k/2^n} \bigr),
\end{eqnarray*}
where
\[
\tilde H ( s,x,y )=\se_{x-y} \bigl( \cond{G \bigl( X^{s}
\bigr)} {\underline X_s>0} \bigr).
\]
By the continuity assumptions of $f,F$ and $G$ we can pass to the limit
using the Chaumont--Doney lemma to get
\[
\se_0 \bigl( F ( X_{\cdot\wedge\rho_t} )f ( \rho_t )G
\bigl( {X}^{\rightarrow} \bigr) \bigr) =\se_0 \bigl[F (
X_{\cdot\wedge\rho_t} )f ( \rho_t )\se^{\mathrm{me},t-\rho_t} ( G ) \bigr].
\]

By time reversal at $t$, we see that
\[
\se_0 \bigl( F \bigl( {X}^{\leftarrow} \bigr)f (
\rho_t )G \bigl( {X}^{\rightarrow} \bigr) \bigr) = \se_0
\bigl[\hat\se^{\mathrm{me},\rho_t} ( F )f ( \rho_t ) \se^{\mathrm{me},t-\rho_t} ( G
) \bigr].
\]
\upqed\end{pf*}

We now establish a Denisov-type decomposition for bridges of L\'evy processes.

\begin{pf*}{Proof of Theorem \ref{DenisovForBridgesTheorem}}
Since $0$ is regular for $(-\infty,0)$ under $\p_0$, using local
absolute continuity between $\p_{0,0}^t$ and $\p_0$ we see that
$\underline X_t<0$ and $\rho_t>0$ almost surely under $\p_{0,0}^t$.
Time reversal and regularity of $0$ for $(0,\infty)$ proves that $\rho_t<t$ almost surely under $\p_{0,0}^t$.

From the absolute continuity relationship between the meander and the
L\'evy process conditioned to stay positive, we see that $\p_{0,x}^{\uparrow,t}$ is a regular conditional probability of $X$ given
$X_t=x$ under $\p^{\mathrm{me},t}$. Hence, Theorem \ref{DenisovTheorem} allows
the conclusion
\[
\se_0 \bigl( F_1 \bigl( {X}^{\leftarrow} \bigr)f (
\rho_t )g ( \underline X_t,X_t
)F_2 \bigl( {X}^{\rightarrow} \bigr) \bigr) =\se_0
\bigl[\hat\se_{0,-\underline X_t}^{\uparrow,\rho _t} ( F_1 )f (
\rho_t )g ( \underline X_t,X_t )\hat
\se_{0,X_t-\underline X_t}^{\uparrow,\rho_t} ( F_2 ) \bigr].
\]
Hence, we see that for every continuous and bounded $f,g_1,g_2,F_1,F_2$:
\begin{eqnarray*}
&&\int\se_{0,x}^{t} \bigl[F_1 \bigl(
{X}^{\leftarrow} \bigr)f ( \rho_t )g_1 ( \underline
X_t )F_2 \bigl( {X}^{\rightarrow} \bigr)
\bigr]g_2 ( x )p_t ( 0,x ) \,\mathrm{d}x
\\
&&\quad=\int\se_{0,x}^{t} \bigl[\hat\se_{0,\underline X_t}^{\uparrow
,\rho_t}
( F_1 )f ( \rho_t )g_1 ( \underline
X_t )\se_{0,\underline X_t}^{\uparrow,\rho_t} ( F_2 )
\bigr]g_2 ( x )p_t ( 0,x ) \,\mathrm{d}x.
\end{eqnarray*}
Since both integrands are continuous because of weak continuity of the
bridge laws (of the L\'evy process, its dual, and their conditioning to
remain positive), we see that
\begin{eqnarray*}
\se_{0,0}^{t} \bigl( F_1 \bigl(
{X}^{\leftarrow} \bigr)f ( \rho_t )g_1 ( \underline
X_t )F_2 \bigl( {X}^{\rightarrow} \bigr) \bigr) =
\se_{0,0}^{t} \bigl[\hat\se_{0,\underline X_t}^{\uparrow ,\rho
_t} (
F_1 )f ( \rho_t )g_1 ( \underline
X_t )\se_{0,\underline X_t}^{\uparrow,\rho_t} ( F_2 ) \bigr].
\end{eqnarray*}
\upqed\end{pf*}

Theorems \ref{DenisovTheorem} and \ref{DenisovForBridgesTheorem} imply
the following corollary.

\begin{corollary}
\label{JointDensityPositionMinimumFinalValueCorollaryWithBridge}
The joint law of $ ( \rho_t,\underline X_t,X_t )$ under $\p_0$
admits the expression
\[
\p_0 ( \rho_t\in \mathrm{d}s,-\underline
X_t\in \mathrm{d}y,X_t-\underline X_t\in
\mathrm{d}z )=\p_0 ( \rho_t\in \mathrm{d}s )\hat
\p^{\mathrm{me},s} ( X_s\in \mathrm{d}y ) \p^{\mathrm{me},t-s} (
X_{t-s}\in \mathrm{d}z ).
\]
The joint law of $ ( \rho_t,\underline X_t )$ under $\p_{0,0}^t$
admits the expression
\[
\p_{0,0}^t ( \rho_t\in \mathrm{d}s, -\underline
X_t\in \mathrm{d}y )=\frac
{\p_0  ( \rho_t\in \mathrm{d}s )}{p_t  ( 0,0 )} \hat\p^{\mathrm{me},s} (
X_s\in \mathrm{d}y ) \p^{\mathrm{me},t-s} ( X_{t-s}\in
\mathrm{d}y ).
\]
%
\end{corollary}
%
%

\section{An extension of Vervaat's theorem}
\label{VervaatSection}
In this section, we prove Theorem \ref{VervaatTheorem}.
\begin{pf*}{Proof of Theorem \ref{VervaatTheorem}}
Let $\lambda_t$ be Lebesgue measure on $(0,t)$; we will work under the
law $\p_{0,0}^{\uparrow,t}\otimes\lambda_t$ and we keep the notation
$X$ for the canonical process (which is now defined on the product
space $\Omega\times(0,t)$) and $U$ will be the projection in the
second coordinate of this space. Then a regular version of the law of
$X_{r\wedge U},r\geq0$, and $X_{ ( U+r )\wedge t},r\geq0$, given
$U=u$ and $X_U=y$ is $\p_{0,y}^{\uparrow,t}\otimes\p_{y,0}^{\uparrow,
t}$ and the law of $ ( U,X_U )$ admits the following density:
\[
( u,y )\mapsto\frac{p^\uparrow_s  ( 0,y
)p^\uparrow_{t-s}  ( y,0 )}{t\cdot p^\uparrow_t  (
0,0 )} \,\mathrm{d}u \, \lambda^\uparrow (
\mathrm{d}x ).
\]

On the other hand, the Vervaat transformation of $X$ is the
concatenation of $X^{\rightarrow}$ followed by the time-reversal of
$X^{\leftarrow}$ at $\rho_t$; under $\p_{0,0}^t$, the joint law of
$ ( X^\rightarrow,X^\leftarrow )$ given $\rho_t=t-s$ and
$\underline
X_t=y$ is $\p_{0,y}^{\uparrow,s}\otimes\p_{y,0}^{\uparrow,t-s}$.

We finish the proof by identifying the law of $ ( t-\rho_t,\underline X_t )$ under $\p_{0,0}^t$ with that of $ (
U,X_U )$
under $\p_{0,0}^{\uparrow,t}\times\lambda_1$. Indeed, by Corollary
\ref
{JointDensityPositionMinimumFinalValueCorollaryWithBridge}, a version
of the density with respect to Lebesgue measure of $ ( t-\rho_t,-\underline X_t )$ at $(s,y)$ is
\[
\frac{\p_0  ( \rho_t\in t-\mathrm{d}s )p^\uparrow_{t-s}  (
0,y )p_{s}^{\uparrow}  ( y,0 )h  ( y
)\hat h  ( y )}{p_t  ( 0,0 )\hat
\beta_{t-s}\beta_{s}}.
\]
However, by the Chapman--Kolmogorov equations we can obtain the marginal
density of $\rho_t$ under $\p_{0,0}^t$ at $u$:
\[
\frac{\p_0  ( \rho_t\in t-\mathrm{d}s )p^\uparrow_t  (
0,0 )}{p_t  ( 0,0 )\hat\beta_{t-s}\beta_s}.
\]
Since $\rho_t$ has an uniform law under $\p_{0,0}^t$ as proved in
Knight \cite
{MR1417982}, then, actually the above expression is almost surely equal
to $1/t$ so that a joint density of $ ( \rho_t,\underline
X_t )$
under $\p_{0,0}^t$ is
\[
( u,y )\mapsto\frac{p^\uparrow_s  ( 0,y
)p^\uparrow_{t-s}  ( y,0 )}{t\cdot p^\uparrow_t  (
0,0 )} \,\mathrm{d}u\, \lambda^\uparrow (
\mathrm{d}y ).
\]
\upqed\end{pf*}

\section*{Note added in proof}
It has been pointed out to the author that the proof of what we state as
Theorem~\ref{WeaklyContinuousBridgesTheorem} (taken from
reference \cite{MR2789508}) has an error. Since we use this theorem to
construct our bridges, the reader should note
that Theorem \ref{WeaklyContinuousBridgesTheorem} has a simple proof when the Markov process
in the statement has a Feller dual. This is
the case both for L\'{e}vy processes and for L\'{e}vy processes
conditioned to stay positive (thanks to Lemma \ref{DualityForLPSP}
for the latter), and this ensures the validity of the
results in this paper.

\section*{Acknowledgements}
The author would like to thank Lo\"ic Chaumont, Pat Fitzsimmons, and
Jim Pitman for stimulating conversations, information, references which
were helpful in developing this research, and for spotting some
obscurities and errors in the paper. Further thanks are due to Jim
Pitman who suggested to look for further extensions of Vervaat's
theorem and performed the work of an engaging postdoctoral supervisor.
Research partially conducted at the Department of Statistics of the
University of California at Berkeley. Supported by a
postdoctoral fellowship from UC MexUS -- CoNaCyt,
NSF Grant DMS-08-06118 and PAPIIT Grant IN100411.

%


\printhistory

\end{document}